\newcommand{\Red}[1]{\color{red} #1}
\newcommand{\Green}[1]{\color{green} #1}
\newcommand{\Blue}[1]{\color{blue} #1}
\newcommand{\dgeq}{\succeq}
\newcommand{\fm}{{\mathfrak m}}
\newcommand{\gr}{{\mathfrak{gr}}}
\newcommand{\p}{{\partial}}
\DeclareMathOperator{\ord}{ord}
\DeclareMathOperator{\Spec}{Spec}
\DeclareMathOperator{\Dual}{Dual}
\DeclareMathOperator{\POStoN}{(\bZ_{\geq 0})^N}
\DeclareMathOperator{\HP}{HP} 
\DeclareMathOperator{\initial}{in}
\newtheorem{theorem}{Theorem}[section]
\newtheorem{lemma}[theorem]{Lemma}
\newtheorem{example}[theorem]{Example}
\newtheorem{proposition}[theorem]{Proposition}
\newtheorem{problem}[theorem]{Problem}
\newtheorem{algorithm}[theorem]{Algorithm}
\newtheorem{remark}[theorem]{Remark}
\newcommand{\codim}{\operatorname{codim}}
\newcommand{\TruncatedTruncation}{\operatorname{DoubleTruncation}}
\newcommand{\IsWitnessPolynomial}{\operatorname{IsWitnessPolynomial}}
\newcommand{\alg}[1]{\operatorname{#1}}
\newcommand{\bN}{{\mathbb N}}
\newcommand{\bZ}{{\mathbb Z}}
\newcommand{\bC}{{\mathbb C}}
\newcommand{\bQ}{{\mathbb Q}}
\newcommand{\bV}{{\mathbb V}}
\newcommand\sA{{\mathcal A}}
\newcommand{\Span}{\operatorname{span}}
\newcommand\Var{{\bV}}
\def\gcorner{g-corner}
\newcommand{\ideal}[1]{\langle #1 \rangle}
\newcommand{\LT}{\operatorname{in}_\geq}
\newcommand{\LTg}{\operatorname{in}_\succeq}
  \def \tab#1{\kern #1 truein}
\def \dId {I^{(d)}}
\def \dXd {X^{(d)}}
\def \dYd {Y^{(d)}}
\newcommand\Ass{\operatorname{Ass}}
\newcommand\Tor{\operatorname{Tor}}
\newcommand\calC{{\mathcal C}} %
\newcommand\calO{{\mathbf O}} 
\newcommand\calN{{\mathcal N}} 
\newcommand{\dehomog}{\varphi}
\begin{document}
\title{Numerical algorithms for\\ detecting embedded components}
\author{
Robert Krone\thanks{School of Mathematics, Georgia Tech, Atlanta GA, USA ({\tt rkrone3@math.gatech.edu}). Partially supported by NSF grant DMS-1151297}
\and
Anton Leykin\thanks{School of Mathematics, Georgia Tech, Atlanta GA, USA ({\tt leykin@math.gatech.edu}). Partially supported by NSF grants DMS-0914802 and DMS-1151297}}
\maketitle


\begin{abstract}
We produce algorithms to detect whether a complex affine variety computed and presented numerically by the machinery of numerical algebraic geometry corresponds to an associated component of a polynomial ideal.



\end{abstract}


\section{Introduction}\label{Sec:intro}
An algorithmic approach to complex algebraic geometry known as {\em numerical algebraic geometry} (numerical AG, see~\cite{SVW9,Sommese-Wampler-book-05}) provides fast approximate methods to {\em solve} systems of polynomial equations. 
In the case when the solution set is a finite set of points {\em polynomial homotopy continuation} techniques are able to find approximations to all solutions. 
In the case when the solution set is positive-dimensional, it is a union of irreducible complex affine varieties and {\em numerical irreducible decomposition}~\cite{SVW1} is performed to capture the information about the irreducible pieces with numerical data stored in the so-called {\em witness sets}.
In ideal-theoretic terms, given a generating set of an ideal $I$ in the polynomial ring $R = \bC[x] = \bC[x_1,\dots,x_N]$, the numerical irreducible decomposition gives a numerical description of the components corresponding to the prime ideals $P_i$ in the decomposition of the radical $\sqrt I = P_1\cap\cdots\cap P_r$.

The goal of {\em numerical primary decomposition}~\cite{Leykin:NPD} is to find a generic point on every component of the affine scheme $\Spec(R/I)$; in ideal-theoretic terms, find a generic\footnote{Here and throughout the paper we say a ``generic point on component'' to refer to a point in the complement of a proper Zariski closed subset of the component containing the ``degeneracy locus'' dictated by the context. One can trust numerical methods mentioned so far to produce random points on components that avoid the degeneracy locus ``with probability 1''.}  point on the component $\Var(P)$ for every associated prime ideal $P\in\Ass(R/I)$.  In general a primary decomposition will include {\em embedded components} not found in an irreducible decomposition, whose corresponding primes strictly contain other associated primes of $I$.


There are various methods that produce generic points on pieces of the singular locus that may or may not represent embedded components. We refer to such pieces as {\em suspect components};  if a suspect component does not turn out to be embedded, we call it a {\em pseudocomponent}. 

We see answering the following question algorithmically as one of the first stepping stones to extending numerical AG methods to the full generality of affine schemes.
\begin{problem}[Main Problem] For
\begin{enumerate}
  \item an ideal $I\subset R$ given by a finite generating set,
  \item a point $y\in\bC^n$, and
  \item generic points $y_1,\ldots,y_r$ on a collection of components $\Var(P_1),\ldots,\Var(P_r)$, $P_i\in\Ass(R/I)$, that contain $y$,
\end{enumerate}
decide whether there is a component $\Var(P)$, $P\in\Ass(R/I)$, that contains $y$ and is distinct from $\Var(P_i)$, for $i=1,\ldots, r$.
\end{problem}

We assume that part (1) of the input is {\em exact}, i.e., the coefficients of the generators of the ideal $I$ are known {\em exactly}. However, following the framework of numerical AG we assume no access to generators of prime ideals in the parts (2) and (3), nor to exact values for $y,y_1,\ldots,y_r$.

We shall describe ideals of a polynomial ring $R$ as well as the ideals of the localization $R_y$ of $R$ at a point $y\in\bC^N$
in terms of the {\em Macaulay dual spaces}.

For convenience, hypothetically, the reader may assume also that the points $y, y_1,\ldots,y_r$ in the parts (2) and (3) are exact and the Macaulay dual space algorithm is exact. With this assumption our algorithms become {\em purely symbolic}. In reality, our approach is hybrid: we state what numerical ingredients are necessary in~\S\ref{sec:oracles-and-conclusion}. 

\smallskip

The algorithms in this article are implemented in {\em Macaulay2}~\cite{M2www} with parts of code residing in the packages {\em NumericalHilbert}~\cite{Krone:NumericalHilbert} and {\em NumericalAlgebraicGeometry}~\cite{Leykin:NAG,NAGwww}. Instructions on steps necessary to reproduce results for the examples are posted at
\begin{center}\url{www.rckr.one/embedded-component-test/}.\end{center}

\medskip

The beginning of \S\ref{Sec:Prelim} mostly covers basic preliminaries: Macaulay dual spaces and their connection to local polynomial rings, (local) Hilbert function, regularity index, s- and g- corners. Also \S\ref{Sec:Prelim} reviews the operation of taking a colon ideal through the numerical lens and develops the local ideal membership test. The numerical primary decomposition is revisited in \S\ref{Sec:NPD}; this section is not essential, but is used in setting up examples and to provide a better understanding of the general context. 
The main part of this work, \S\ref{Sec:embedded-test}, develops algorithms for embedded component testing. One important side result worth highlighting is Theorem~\ref{thm:Ass-of-a-generic-slice}. It concerns associated components of the generic hyperplane section of an affine scheme and makes the dimension reduction possible in our approach.
Discussion of numerical ingredients and conclusion is in \S\ref{sec:oracles-and-conclusion}.

\medskip

\noindent {\bf Acknowledgments.} We are grateful to Jon Hauenstein for helpful discussions that started at the Institut Mittag-Leffler, which kindly hosted both Hauenstein and Leykin in the Spring of 2011. 

We also would like to thank numerous people, in particular, Uli Walther and Karl Schwede, for discussions of matters in~\S\ref{subsec:positive-dim-suspect} as well as to Hailong Dao and Sasha Anan'in who produced a proof of Lemma~\ref{lem:colon-hyperplane-commute} (via {\tt mathoverflow.net}). 

\section{Preliminaries}\label{Sec:Prelim}

For $\alpha \in \POStoN$ and $y\in\bC^N$, let
\begin{align*}
x^\alpha &= x_1^{\alpha_1}\cdots x_N^{\alpha_N}\,,\\
|\alpha|&= \sum_{i=1}^N \alpha_i\,,\\
\alpha! &= \alpha_1!\alpha_2!\dots\alpha_N!\,,\\
\p^\alpha &= \frac{1}{\alpha!}\frac{\p^{|\alpha|}}{\p x^\alpha}\,,
\end{align*}
and the map $\p^\alpha[y]: R \rightarrow \bC$~be defined by~$\p^\alpha[y](g) = (\p^\alpha g)(y).$

Instead of $\partial^\alpha[y]$ we sometimes write $\partial^{x^\alpha}[y]$, for example, $\partial^1 - \partial^{y} + \partial^{x^2yz}$, and
when the point $y$ is implied $\partial^\alpha[y]$ we write~$\partial^\alpha$.
For $y \in \bC^N$, let \[D_y = \Span_{\bC}\left\{\partial^\alpha[y]~|~\alpha \in \POStoN \right\}\]
be the vector space of differential functionals at $y$. This linear space is graded by the {\em order}, for a finite sum $q = \sum c_\alpha\p^\alpha$,
\[
\ord q = \max_{c_\alpha\neq 0} |\alpha|.
\]
The {\em homogeneous} part of order $i$ of $q\in D_y$ is referred to as $q_i$. This grading is the associated graded linear space of the filtration $D_y^*$:
\[D_y^0 \subset D_y^1 \subset D_y^2 \subset \ldots \text{, where }D_y^i = \{q\in D_y~|~\ord q\leq i\}\}.\]

The {\it Macaulay dual space}, or simply {\em dual space},
is the $\bC$-space of differential functionals that vanish
at $y$ for an ideal $I\subset\bC[x]=\bC[x_1,\dots,x_N]$~is
\begin{equation}\label{Eq:Dual}
D_y[I] = \{q\in D_y~|~q(g)=0\hbox{~for all~}g\in I\}.
\end{equation}
The dual space $D_y[I]$ is a linear subspace of $D_y$, a basis of $D_y[I]$ is called a {\it dual basis} for $I$.

\subsection{Duality}\label{Sec:Local-vs-Dual}
To help the reader, in this section we list key facts about ideal and dual space correspondence; see~\cite{KL:eliminating-dual-spaces} for proofs and references.

Without loss of generality, we may assume $y=0\in\bC^N$.
Consider the local ring $R_0 = R_\fm$ where $\fm = \ideal{x_1,\ldots,x_N}$. Let the space of dual functionals be defined as above replacing $R$ (polynomial) with $R_0$ (rational functions with denominators not vanishing at 0).

\begin{remark}\label{rmk:local-dual}
Ideals in $R$ with all primary components containing the origin are in one-to-one correspondence with ideals in the local ring $R_0$ given by extension ($I\subset R$ extends to $IR_0\subset R_0$) and contraction  ($I\subset R_0$ contracts to $I\cap R \subset R$, all of whose primary components contain the origin).

For ideal $I \subset R$, the dual space $D_0[I]$ is identical to the dual space of its extension in $R_0$, $D_0[IR_0]$. Note that, for $I \subset R$, $f \in IR_0 \cap R$ if and only if $q(f) = 0$ for all $q \in D_0[I]$.
\end{remark}

It follows from the remark that for  $J_1,J_2 \subset R_0$, we have $J_1 \subsetneq J_2$ if and only if $D_0[J_1] \supsetneq D_0[J_2]$. Hence, an ideal $J \subset R_0$ is uniquely determined by its dual space $D_0[J]$.

$R$ naturally acts on the the dual space by {\em differentiation}.
\begin{eqnarray*}
  x_i:  D_y &\to& D_y\\
             \p^\alpha &\mapsto& \p^{\alpha - e_i}, \ \ \ \ (i=1,\ldots,N),
\end{eqnarray*}
where $\p^\beta$ is taken to be $0$ when any entry of $\beta$ is less than zero.
For all $q \in D_0$ and $f \in R_0$, note that $(x_i \cdot q)(f) = q(x_i f)$, so the action of $x_i$ on a functional can be seen as pre-multiplication by $x_i$.

A subspace $L \subset D_0$ is the dual space of some ideal \mbox{$I_L \subset R_0$} if and only if it is closed under differentiation: $x_i \cdot L \subset L$ for all \mbox{$0 \leq i \leq N$}.

The map $$\Dual : \{\text{ideals of } R_0\}  \to \{\text{subspaces of }D_0\text{ closed under differentiation}\}$$ defined by $\Dual(J) = D_0[J]$ is a bijection and provides another way to characterize the dual space.

An alternative characterization of the dual space can be given via the following Proposition. 
\begin{proposition}\label{Prop:closedness}
For ideal $J=\langle f_1,\dots,f_n\rangle\subset R_0$, let $L$ be the maximal subspace of $D_0$ that is closed under differentiation and satisfies $q(f_i) = 0$ for all $q \in L$ and each $0 \leq i \leq n$.  Then $L = D_0[J]$.
\end{proposition}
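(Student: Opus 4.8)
The plan is to identify $D_0[J]$ as the maximum of the family
\[
\sL = \{\, V \subseteq D_0 \ :\ V \text{ is a subspace, } x_i\cdot V\subseteq V \text{ for all } i,\ \text{and } q(f_i)=0 \text{ for all } q\in V \text{ and all } i\,\},
\]
which simultaneously shows that the maximal subspace $L$ posited in the statement exists and proves $L = D_0[J]$.

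First I would check that $D_0[J]\in\sL$. As the dual space of an ideal of $R_0$, $D_0[J]$ is closed under differentiation, and since $f_i\in J$ for every $i$, every $q\in D_0[J]$ satisfies $q(f_i)=0$; hence $D_0[J]$ is one of the competing subspaces. Next I would show that $D_0[J]$ dominates every $V\in\sL$. Because $V$ is closed under differentiation, the bijection $\Dual$ recalled above supplies an ideal $I_V = \Dual^{-1}(V) = \{f\in R_0 : q(f)=0 \text{ for all } q\in V\}$ with $V = D_0[I_V]$. Since $q(f_i)=0$ for all $q\in V$, each $f_i$ lies in $I_V$, so $J=\langle f_1,\dots,f_n\rangle\subseteq I_V$. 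The correspondence $\Dual$ is inclusion-reversing (that is, $J_1\subseteq J_2$ iff $D_0[J_1]\supseteq D_0[J_2]$), so $V = D_0[I_V]\subseteq D_0[J]$. Therefore $D_0[J]$ is the largest member of $\sL$, which is precisely the assertion $L = D_0[J]$.

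There is no genuine obstacle here: the proposition is essentially a repackaging of the bijection between ideals of $R_0$ and differentiation-closed subspaces of $D_0$ together with its order-reversing property. The only point meriting a word of care is that the statement presupposes the existence of a maximal such subspace $L$; the argument above supplies this, since it exhibits the concrete subspace $D_0[J]$ both as a member of $\sL$ and as an upper bound for $\sL$.
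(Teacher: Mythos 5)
The paper states this proposition without proof, deferring to the cited reference, so there is no in-paper argument to compare against. Your proof is correct and is the standard derivation: it follows directly from the two facts the paper records immediately beforehand (the bijection $\Dual$ between ideals of $R_0$ and differentiation-closed subspaces of $D_0$, and its inclusion-reversing property), and you rightly note that exhibiting $D_0[J]$ as both a member of $\sL$ and an upper bound for $\sL$ also disposes of the implicit existence claim for the maximal subspace $L$.
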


From Proposition~\ref{Prop:closedness} it follows that for $I = \ideal{f_1,\dots,f_n}$, a dual element $q$ is in $D_0[I]$ if and only if $q(f_i) = 0$ and $x_j \cdot q \in D_0[J]$ for each $0 \leq i \leq n$ and $0 \leq j \leq N$.  Note that this leads to a completion algorithm for computing~$D_y^k[I]$ (see, e.g.,~ \cite{Mourrain:inverse-systems}) assuming $y$ is in the vanishing set of $I$:
\begin{algorithmic}
\STATE $D_y^0[I] \leftarrow \Span_\bC(\p^0)$
\FOR{$i = 1 \to k$}
\STATE $D_y^i[I] \leftarrow \{ q\in D_y~|~x_j \cdot q \in D_y^{i-1}[I] \mbox{ for all } j=1,\ldots,N \mbox{ and } q(f_i) = 0 \mbox{ for all } i=1,\ldots,n \}$
\ENDFOR
\end{algorithmic}

For ideals $J_1,J_2 \subset R_0$, the following can be readily shown:
\begin{align*}
 D_0[J_1 + J_2] &= D_0[J_1] \cap D_0[J_2]\,,\\
 D_0[J_1 \cap J_2] &= D_0[J_1] + D_0[J_2].
\end{align*}

For the truncated dual spaces the second equality holds if $J_1$ and $J_2$ are homogeneous ideals. In general, we have only one inclusion:
  $$D_0^k[J_1\cap J_2] \supset D_0^k[J_1]+D_0^k[J_2].$$
Since $D_0^k[J_1\cap J_2]$ is finite dimensional, it follows that
  $$D_0^k[J_1\cap J_2] \subset D_0^l[J_1]+D_0^l[J_2]$$
for some $l$.

\subsection{Primal and dual monomial order}
Let $\geq$ be a local monomial ordering (a total order on the monomials which respects multiplication and has $1$ as the largest monomial), which we shall refer to as a {\em primal order}. For
$g = \sum_{\alpha} a_\alpha x^\alpha$, a nonzero polynomial,
the {\em initial term} with respect to $\geq$ is
the largest monomial with respect to $\geq$ that has a nonzero coefficient, namely
$$\initial_{\geq}(g) = \max_\geq\{x^\alpha~|~a_\alpha\neq 0\}.$$
For an ideal $I$, the {\it initial terms} of $I$ with respect to $\geq$
is the set of initial terms with respect to $\geq$ of all the elements of $I$, namely
$$\initial_{\geq}(I) = \{\initial_{\geq}(f)~|~f \in I\}.$$
A monomial is called a {\it standard monomial} of $I$
with respect to $\geq$ if it is not a member of $\initial_\geq(I)$.

We shall order the monomial differential functionals via the {\em dual order}:
$$
\partial^\alpha \dgeq \partial^\beta\ \Leftrightarrow\ x^\alpha \leq x^\beta,
$$
the order opposite to $\geq$.

The {\em initial term} $\initial_\dgeq(q)$ of $q$
is the largest monomial differential functional that has a nonzero coefficient.

A dual basis that has distinct initial terms is called a {\em reduced dual basis}.
Using a (possibly infinite dimensional) Gaussian elimination procedure,
it is easy to see that any dual basis can be transformed into a reduced dual basis.

\begin{theorem}[Theorem 3.3 of~\cite{KL:eliminating-dual-spaces}]\label{theorem:complementary-staircases}
For an ideal $I \subset R$ the monomial lattice $\bN^N$ is a disjoint union of $\LTg D_0[I]$ and $\LT I$.
\end{theorem}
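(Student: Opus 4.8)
The plan is to establish the two complementary facts separately: that $\LTg D_0[I]$ and $\LT I$ are disjoint, and that together they exhaust $\bN^N$. Throughout I identify a monomial $x^\alpha$ with its exponent $\alpha\in\bN^N$, and I identify the monomial differential functional $\partial^\alpha$ with the same exponent $\alpha$; under the dual order $\dgeq$, the correspondence $\partial^\alpha \mapsto x^\alpha$ reverses the order, so "largest initial term of a dual element" translates to "smallest initial monomial" on the primal side.

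\textbf{Disjointness.} Suppose for contradiction that $x^\alpha \in \LT I$ and simultaneously $\partial^\alpha \in \LTg D_0[I]$. The first means there is $f\in I$ with $\initial_\geq(f) = x^\alpha$, so $f = x^\alpha + (\text{monomials} <_\geq x^\alpha)$. The second means there is $q\in D_0[I]$ with $\initial_\dgeq(q) = \partial^\alpha$, i.e.\ $q = \partial^\alpha + (\text{terms } \prec_\dgeq \partial^\alpha)$, and since $\partial^\beta \prec_\dgeq \partial^\alpha \Leftrightarrow x^\beta >_\geq x^\alpha$, the trailing terms of $q$ are monomial functionals $\partial^\beta$ with $x^\beta >_\geq x^\alpha$. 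Now evaluate: $q(f) = 0$ because $f\in I$ and $q\in D_0[I]$. On the other hand, $\partial^\beta(x^\gamma) = \delta_{\beta\gamma}$ (Kronecker delta, since $\partial^\beta = \tfrac{1}{\beta!}\partial^{|\beta|}/\partial x^\beta$ applied to $x^\gamma$ at the origin picks out $\gamma=\beta$). The only way a leading-term monomial of $q$ can pair nontrivially with a monomial of $f$ is when the exponents coincide; the leading term $\partial^\alpha$ of $q$ pairs with $x^\alpha$, the leading term of $f$, giving contribution $1$. Every other pairing either involves a trailing term of $q$ with exponent $x^\beta >_\geq x^\alpha$ (which cannot equal any monomial of $f$, all of which are $\le_\geq x^\alpha$) or the leading term $\partial^\alpha$ of $q$ with a trailing monomial of $f$ (exponent $<_\geq x^\alpha$, hence $\neq\alpha$). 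So $q(f) = 1 \neq 0$, a contradiction. Hence the two sets are disjoint.

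\textbf{Covering.} It remains to show every $\alpha\in\bN^N$ lies in one of the two sets; equivalently, if $x^\alpha \notin \LT I$ then $\partial^\alpha \in \LTg D_0[I]$. The natural device is a linear-algebra pairing between truncations. Fix $\alpha$ with $|\alpha| = k$ and suppose $x^\alpha$ is a standard monomial. Consider the (finite) set $S$ of monomials $x^\beta$ with $x^\beta \le_\geq x^\alpha$; by the structure of local orders this is a finite set containing $x^\alpha$ as its $\geq$-minimum among those with $x^\beta \ge_\geq \ldots$ — more carefully, I would work in the finite-dimensional space $R/\fm^{k+1}$ (or an appropriate truncation large enough to see $x^\alpha$) and pair it against the truncated dual space $D_0^{k'}[I]$ for suitable $k'$, using the perfect pairing between monomials and monomial functionals. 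The key claim is: $x^\alpha$ being standard means the coset of $x^\alpha$ is not in the span of $\{x^\beta : x^\beta >_\geq x^\alpha\} + I$, so by duality there is a functional in $D_0[I]$ detecting $x^\alpha$ that vanishes on all strictly-$\geq$-larger monomials; after Gaussian elimination (reduction to a reduced dual basis) such a functional can be taken with initial term exactly $\partial^\alpha$, placing $\alpha \in \LTg D_0[I]$.

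\textbf{The main obstacle} is the covering direction, specifically making the duality between "standard monomials of $I$" and "initial terms of the dual space" precise despite $D_0[I]$ being possibly infinite-dimensional: one must pass to a truncation $D_0^l[I]$ large enough that the witnessing functional appears, and argue that the local monomial order guarantees only finitely many monomials are $\geq$-above any given $x^\alpha$, so the relevant pairing is genuinely finite-dimensional and the rank/nullity bookkeeping goes through. Once the pairing is set up correctly — monomials of order $\le l$ on one side, the finite-dimensional space $D_0^l[I]$ on the other, with the standard-monomial condition translated into non-membership in a certain subspace — the conclusion that exactly one of $\{x^\alpha \in \LT I,\ \partial^\alpha \in \LTg D_0[I]\}$ holds follows from the disjointness already proved together with a dimension count showing the two contributions account for all monomials of each order. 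I would also invoke Remark~\ref{rmk:local-dual} to replace $I\subset R$ by its extension $IR_0$ without changing either $\LT I$ or $D_0[I]$, so that the local-order machinery applies cleanly.
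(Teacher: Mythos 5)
The paper does not actually prove this statement; it is imported verbatim as Theorem~3.3 of the cited reference, so I can only judge your argument on its own terms. Your disjointness half is correct and is the standard argument: if $\initial_\geq(f)=x^\alpha$ and $\initial_\dgeq(q)=\partial^\alpha$, then the trailing support of $q$ lies in $\{x^\beta>_\geq x^\alpha\}$ and the trailing support of $f$ in $\{x^\gamma<_\geq x^\alpha\}$, so the pairing $q(f)$ reduces to the single term $\partial^\alpha(x^\alpha)\neq 0$, contradicting $q(f)=0$.

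The covering half, however, has the duality set up backwards, and this is not a cosmetic slip. A functional $q$ with $\initial_\dgeq(q)=\partial^\alpha$ has all its other terms $\partial^\beta\prec_\dgeq\partial^\alpha$, i.e.\ it is supported on $T=\{\beta: x^\beta\geq_\geq x^\alpha\}$; consequently it automatically annihilates every monomial $x^\gamma<_\geq x^\alpha$. The separation you must establish is therefore $x^\alpha\notin I+\Span\{x^\gamma : x^\gamma<_\geq x^\alpha\}$, whereas you assert $x^\alpha\notin I+\Span\{x^\beta : x^\beta>_\geq x^\alpha\}$ and ask for a functional ``vanishing on all strictly larger monomials.'' Such a functional would be supported on $\{x^\gamma\leq_\geq x^\alpha\}$, and its $\dgeq$-initial term would be $\partial^\gamma$ for the $\geq$-smallest monomial in its support, generally not $\partial^\alpha$. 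Moreover your claimed equivalence between ``$x^\alpha$ standard'' and non-membership in $I+\Span\{x^\beta>_\geq x^\alpha\}$ fails: an element $h=x^\alpha-g\in I$ with $g$ supported above $x^\alpha$ has $\initial_\geq(h)$ equal to the top monomial of $g$, not $x^\alpha$, so it witnesses neither direction. A second, related error: for a local order the set $\{x^\beta\leq_\geq x^\alpha\}$ is infinite (it contains every multiple of $x^\alpha$, since $x^\delta\leq 1$); the set that is finite, for a degree-compatible local order, is $T=\{x^\beta\geq_\geq x^\alpha\}$, and that is precisely the finiteness the argument needs. The repair is short: let $\pi_T:R\to\bC^T$ extract the coefficients indexed by $T$. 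Then $x^\alpha\in\LT I$ iff $e_\alpha\in\pi_T(I)$, while $\partial^\alpha\in\LTg D_0[I]$ iff some functional annihilating $\pi_T(I)$ is nonzero on $e_\alpha$, i.e.\ iff $e_\alpha\notin\pi_T(I)$; these two conditions are complementary by finite-dimensional linear algebra, which together with your disjointness argument gives the theorem. As written, your covering step would not go through.
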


\subsection{Local Hilbert function, g- and s-corners}\label{Sec:Hilbert}

The Hilbert function of an ideal $I \subset R_0$ provides combinatorial information about $I$ that can be computed numerically using truncated dual spaces.  

For an ideal $I\subset R_0$ define the {\em Hilbert function} as
\begin{align*}
H_I(k) &= \dim_\bC (\gr(R_0/I)_k) = \dim_\bC\left(\frac{I+\fm^{k+1}}{I+\fm^{k}}\right) \\
&= \dim_\bC\left(R_0/(I+\fm^{k+1})\right) - \dim_\bC\left(R_0/(I+\fm^{k})\right).
\end{align*}

The Hilbert function is determined by the initial ideal with respect to the primal monomial order (that respects the degree): $H_I(k) = H_{\LT(I\cap R)}(k),\ \text{for all }k\in\bN.$

We can compute the Hilbert function using truncated dual spaces:
$$H_I(k) = \dim_\bC D_0^k[I] - \dim_\bC D_0^{k-1}[I],\ \text{for }k\geq 0, $$
where $\dim_\bC D_0^{-1}[I]$ is taken to be 0.

The Hilbert function $H_I(k)$ is a polynomial for all $k \geq m$ for a sufficiently large $m\geq 0$ (see, e.g., \cite[Lemma 5.5.1]{Singular-book-02}). This polynomial is called the {\em Hilbert polynomial} $\HP_I(k)$. If the dimension of $I \subset R_0$ is $d$, then $\HP_I(k)$ is a polynomial of degree $d-1$.  

The {\em regularity index} of the Hilbert function is
$$\rho_0(I) = \min\{\,m \,: \,H_I(k)=\HP_I(k)\text{ for all } k \geq m\,\}.$$

For a 0-dimensional ideal $I$, the {\em multiplicity} $\mu_0(I)$ is defined as $\dim_\bC (R_0/I) = \dim D_0[I]$.  For $I$ of dimension $d > 0$ with Hilbert polynomial $\HP_I(k) = a_{d-1}k^{d-1} + O(k^{d-2})$ the multiplicity is defined as
 \[ \mu_0(I) = a_{d-1}(d-1)!. \]

The multiplicity of $I$ can be interpreted geometrically as follows.  For $I \subset R_0$ with dimension $d$, let $L \subset R$ be a generic affine plane of codimension $d$.  Then $J = (I \cap R) + L$ is a 0-dimensional ideal and the points of $\bV(J)$ are smooth points of $\bV(I \cap R)$.  The multiplicity of $I$ is the same as that of $J$, which is the sum of the local multiplicities of the points in $\bV(J)$.
In particular, this means that the multiplicity of $I$ can be computed numerically: the points $\bV(J)$ approximated by homotopy continuation and then the local multiplicities at these points obtained via dual spaces.

\smallskip 

We refer to the minimal monomial generators of a monomial ideal $M$ as {\em \gcorner{}s}. We call a monomial $x^\alpha$ an {\em s-corner} of $M$ when $x_i x^\alpha \in M$ for all $i=1,\ldots,n$.  For a general ideal $I$, the g-corners and s-corners of $I$ will refer to the g-corners and s-corners of the monomial ideal $\LT I$, respectively.\footnote{{\bf g-} and {\bf s-} stand for {\bf generators} of $\LT I$ and monomials spanning the {\bf socle} of the quotient $R_0/\LT I$, respectively.}

\begin{figure}[ht]\label{fig:stair}
  \centering
  \includegraphics[width=.7\columnwidth]{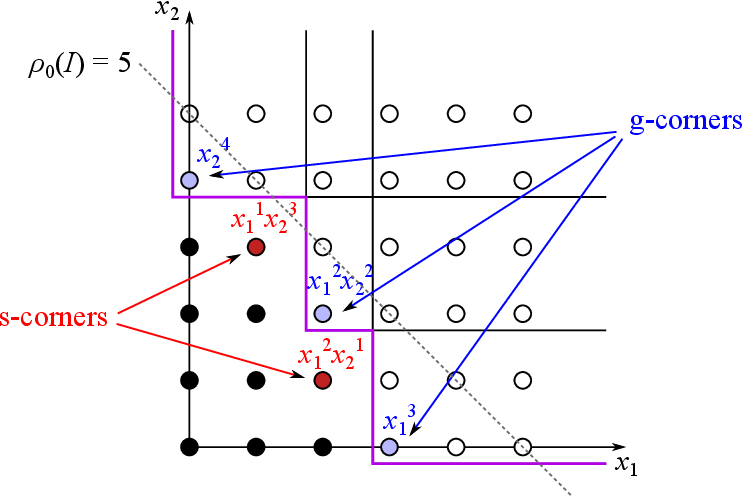}
  \caption{The ``staircase'' of monomial ideal $I = \ideal{x_1^3,x_1^2x_2^2,x_2^4}$ in the lattice of monomials.  The regularity index of the Hilbert function is $\rho_0(I)=5$.}
\end{figure}

\begin{remark}
For a 0-dimensional ideal $I$,  The Hilbert regularity index $$\rho_0(I) = \max \{ |\alpha| \,:\, x^\alpha \text{ is an s-corner of }\LT I\} + 1.$$
\end{remark}

Let $F\subset R$ be a finite set of generators of $I$ and let $F^h\subset R[h]$ denote the homogenization of $F$.  Then it is possible to compute $\rho_0(I)$ using the relationship between the truncated dual spaces $D^k_0[I]$  and $D^l_0[\ideal{F^h}]$ together with a stopping criterion for the latter that recovers all \gcorner{}s (and, therefore, all corners) of~$\ideal{F^h}$.  

\begin{remark}\label{rem:HF-algo} Here we outline the idea of the algorithm of \cite{Krone:dual-bases-for-pos-dim} that computes the \gcorner{}s and, therefore, the s-corners and the regularity index.

Let $\varphi:R[h] \to R$ denote the dehomogenization map sending $h$ to 1.  We equip $R[h]$ with the unique graded local order $\geq$ such that for monomials $a,b \in R[h]$ with the same total degree, $a \geq b$ if and only if $\varphi(a) \geq \varphi(b)$.  By calculating a reduced dual basis of $D_0^k[\ideal{F^h}]$ for a given $k$, we find the monomials in the complement of $\LTg D_0^k[\ideal{F^h}]$, which by Theorem~\ref{theorem:complementary-staircases} correspond to the monomials of $\LT\ideal{F^h}$ of degree $\leq k$.  Examining these monomials, we deduce all \gcorner{}s of $\ideal{F^h}$ which have degree $\leq k$.  The calculation is run for successively higher values of $k$ until all of the \gcorner{}s of $\ideal{F^h}$ are found.  

 If $C$ is a set of monomial generators of $\LT \ideal{F^h}$ then $\varphi(C)$ generates $\LT \ideal{F}$.
\end{remark}

\subsection{Quotient ideals and  local ideal membership test}\label{Sec:eliminating}

Recall that any polynomial $g \in R$ defines a differential operator on $D_0$ by $(g\cdot p)(f) = p(gf)$.

\begin{theorem}[Theorem~2.20 of~\cite{KL:eliminating-dual-spaces}]\label{prop:dualOfColonIdeal}
	$D_0[I:\ideal{g}] = g \cdot D_0[I]$.
\end{theorem}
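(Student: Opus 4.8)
\noindent The plan is to realize the right-hand side, $g \cdot D_0[I]$, as the dual space of the colon ideal $I : \ideal{g}$, using the order-reversing bijection $\Dual$ between ideals of $R_0$ and differentiation-closed subspaces of $D_0$. Concretely: first show $g \cdot D_0[I]$ is differentiation-closed, so it equals $D_0[J]$ for a unique ideal $J$; then identify $J$ with $I : \ideal{g}$.

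For the first step, the operator $p \mapsto g \cdot p$ on $D_0$ commutes with each $p \mapsto x_i \cdot p$, since for $p \in D_0$ and $f \in R_0$
\[
\bigl(x_i \cdot (g \cdot p)\bigr)(f) = p(x_i g f) = \bigl(g \cdot (x_i \cdot p)\bigr)(f)
\]
by commutativity of $R_0$ (and note $\ord(g\cdot p) \le \ord p$, so the $g$-action is well defined on $D_0$). As $D_0[I]$ is differentiation-closed, $x_i \cdot \bigl(g \cdot D_0[I]\bigr) = g \cdot \bigl(x_i \cdot D_0[I]\bigr) \subseteq g \cdot D_0[I]$ for every $i$, so $g \cdot D_0[I] = D_0[J]$ for a unique ideal $J \subset R_0$.

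The second step relies on the ``double annihilator'' identity $K = \{\, h \in R_0 : q(h) = 0 \text{ for all } q \in D_0[K]\,\}$, valid for every ideal $K \subset R_0$: the right-hand side is an ideal (if $h$ lies in it and $a \in R_0$ then $q(ah) = (a\cdot q)(h) = 0$, using that a differentiation-closed subspace is closed under the action of all of $R_0$), it contains $K$, and it has the same dual space as $K$; the uniqueness statement recorded as a corollary to Remark~\ref{rmk:local-dual} then forces equality. Granting this, for $f \in R_0$ we have
\[
f \in J \iff (g\cdot p)(f) = 0 \ \ \forall p \in D_0[I] \iff p(gf) = 0 \ \ \forall p \in D_0[I] \iff gf \in I \iff f \in I : \ideal{g},
\]
the first equivalence being the double annihilator identity for $J$ (with $D_0[J] = g\cdot D_0[I]$) and the third the same identity for $I$. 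Hence $J = I : \ideal{g}$ and $D_0[I : \ideal{g}] = g \cdot D_0[I]$.

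One inclusion, $g \cdot D_0[I] \subseteq D_0[I : \ideal{g}]$, is immediate: if $p \in D_0[I]$ and $f \in I : \ideal{g}$ then $(g\cdot p)(f) = p(gf) = 0$. The reverse inclusion carries all the content, and I expect it to be the main obstacle if attacked directly, since it would require exhibiting, for an arbitrary $q \in D_0[I : \ideal{g}]$, a dual element $p \in D_0[I]$ with $g \cdot p = q$ --- a ``division by $g$'' in $D_0$ with no obvious explicit construction. Routing through $\Dual$ bypasses this: the only genuine checks are the commutation computation above, the well-definedness of the $g$-action on finite-order functionals, and the double annihilator identity (whose justification is the place where one must be a little careful about the passage between $R$ and $R_0$).
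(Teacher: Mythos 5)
Your proof is correct and is essentially the standard duality argument for this statement (the paper itself only quotes the result from its reference, whose proof proceeds the same way): the easy inclusion $g\cdot D_0[I]\subseteq D_0[I:\ideal{g}]$ is the direct computation, and the reverse inclusion follows by observing that $g\cdot D_0[I]$ is closed under differentiation, hence equals $\Dual(J)$ for a unique ideal $J\subset R_0$, which the double-annihilator identity identifies as $I:\ideal{g}$. The only inputs you use beyond direct computation --- bijectivity of $\Dual$ and the fact that an ideal of $R_0$ is recovered as the annihilator of its dual space (Krull intersection, packaged in the corollary to Remark~\ref{rmk:local-dual}) --- are exactly the preliminaries the paper records, so there is no gap.
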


Let $>$ be a primal order on the monomials of the local ring $R_0$, and $\succ$ be the dual order for the dual monomials of $D_0$. For any $p \in D_0$, we must have $\deg \LTg(x_1 \cdot p) \leq \deg \LTg(p) - 1$, since differentiation reduces the degree of each monomial by 1, but may also annihilate the lead term.  Therefore taking the derivative of the dual space truncated at degree $d+1$ we have $x_1 \cdot D_0^{d+1}[I] \subset D_0^{d}[I:\ideal{x_1}]$.  Equality may not hold since there may be some functionals $q \in D_0^{d}[I:\ideal{x_1}]$ with $q = x_1\cdot p$ for some $p \in D_0[I]$ with lead term having degree higher than $d+1$ and is annihilated by $x_1$.  In general, finding $D_0^{d}[I:\ideal{x_1}]$ from the truncated dual space of $I$ may require calculating $D_0^{c}[I]$ up to a very high degree $c$.

Some of these issues can be side-stepped through homogenization.  As in the algorithm described in Remark~\ref{rem:HF-algo}, for $f \in R$, let $f^h \in R[h]$ denote the homogenization of $f$.  Let $\dehomog:R[h] \to R$ be the dehomogenization map, which sends $h$ to 1.

 \begin{proposition}
  $\dehomog(\ideal{F^h} : \ideal{g^h}) = I : \ideal{g}$.
 \end{proposition}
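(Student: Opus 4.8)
The plan is to establish the identity $\dehomog(\ideal{F^h} : \ideal{g^h}) = I : \ideal{g}$ by a direct double-containment argument, exploiting the fact that $\dehomog$ is a ring homomorphism with $\dehomog(F^h) = F$ (so $\dehomog(\ideal{F^h}) \supseteq I$, and in fact $\dehomog(\ideal{F^h}) = I$ since $h\mapsto 1$ kills no information carried by a generator after dehomogenization) and $\dehomog(g^h) = g$. Throughout I will use the standard bookkeeping facts about homogenization: for $f \in R$, $\dehomog(f^h) = f$; for any $p \in R[h]$, $\dehomog(p)^h = p / h^j$ where $h^j$ is the largest power of $h$ dividing $p$; and if $a \in \ideal{F^h}$ is homogeneous then $h^k a \in \ideal{F^h}$ for all $k \geq 0$ and, conversely, $\ideal{F^h}$ is saturated with respect to... actually one must be careful here, $\ideal{F^h}$ need not be $h$-saturated, so I will avoid assuming that.

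For the containment $\dehomog(\ideal{F^h}:\ideal{g^h}) \subseteq I:\ideal{g}$: take $a \in \ideal{F^h}:\ideal{g^h}$, so $a g^h \in \ideal{F^h}$. Applying the ring homomorphism $\dehomog$ gives $\dehomog(a)\, \dehomog(g^h) = \dehomog(a)\, g = \dehomog(a g^h) \in \dehomog(\ideal{F^h}) = I$. Hence $\dehomog(a) \in I:\ideal{g}$. This direction is essentially formal once one knows $\dehomog$ is a homomorphism sending the stated generators to the stated images.

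For the reverse containment $I:\ideal{g} \subseteq \dehomog(\ideal{F^h}:\ideal{g^h})$: take $f \in I:\ideal{g}$, so $fg \in I = \ideal{F}$, say $fg = \sum_i c_i f_i$ with $c_i \in R$. Homogenize this equation: there are nonnegative integers $e, e_i$ with $(fg)^h \cdot h^{e} = \sum_i c_i^h f_i^h \cdot h^{e_i}$ for suitable exponents, i.e. after multiplying through by a sufficiently high power of $h$ we get $f^h g^h h^{m} \in \ideal{F^h}$ for some $m \geq 0$ (using $(fg)^h = f^h g^h$ up to a power of $h$, since the degree of $fg$ may differ from $\deg f + \deg g$ only if $f$ or $g$... actually $(fg)^h = f^h g^h$ exactly when there is no leading-term cancellation, and in general $(fg)^h \cdot h^{?} = f^h g^h$; either way a power of $h$ suffices). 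The element $a := f^h h^{m}$ then satisfies $a g^h \in \ideal{F^h}$, so $a \in \ideal{F^h}:\ideal{g^h}$, and $\dehomog(a) = \dehomog(f^h) \dehomog(h^m) = f \cdot 1 = f$. Hence $f \in \dehomog(\ideal{F^h}:\ideal{g^h})$.

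The main obstacle, and the step that needs the most care, is the bookkeeping with powers of $h$ in the reverse containment: homogenizing a polynomial identity $fg = \sum c_i f_i$ does not directly produce a homogeneous identity among $f^h, g^h, f_i^h$ — one must track the degree mismatches and clear them by multiplying by an appropriate monomial $h^m$. The clean way to phrase this is via the graded structure: extract the top-degree homogeneous component of the homogenization of both sides, or equivalently observe that $\ideal{F^h}$ contains $h^k f^h g^h$ for $k \gg 0$ because the dehomogenization of $h^k f^h g^h$ lies in $I$ together with a degree/saturation argument. Once the power-of-$h$ argument is set up correctly, the equality follows immediately, and one also recovers as a byproduct that $\dehomog(\ideal{F^h}) = I$, which was used silently in the first containment. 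I would present the power-of-$h$ manipulation as the one genuine lemma and keep the rest as routine application of the homomorphism property of $\dehomog$.
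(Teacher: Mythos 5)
Your proof is correct and follows essentially the same route as the paper's: the forward containment by applying the ring homomorphism $\dehomog$ to $ag^h\in\ideal{F^h}$, and the reverse containment by homogenizing the identity $fg=\sum_i c_i f_i$ and clearing degree mismatches with a power of $h$, so that $h^m f^h\in\ideal{F^h}:\ideal{g^h}$ dehomogenizes to $f$. (Your hedging about $(fg)^h$ versus $f^h g^h$ is unnecessary --- over an integral domain these are equal --- but harmless, since the power of $h$ absorbs it either way.)
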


 \begin{proof}
  Suppose $j \in \ideal{F^h} : \ideal{g^h}$, so $jg^h \in \ideal{F^h}$.  Then by dehomogenizing, $\varphi(j)g \in \ideal{F}$ so $\varphi(j) \in I :\ideal{g}$.

  Suppose $j \in I : \ideal{g}$.  Then $jg = \sum_{f \in F} a_f f$ for some $a_f \in R$.  Homogenizing, $h^cj^hg^h = \sum_{f \in F}h^{c_f}a_f^h f^h$ for some non-negative integers $c$ and $c_f$.  Therefore $h^cj^h \in \ideal{F^h} : \ideal{g^h}$ and $\varphi(h^cj^h) = j$.
 \end{proof}

 Since $\ideal{F^h}$ and $g^h$ are both homogeneous,
  \[ g^h\cdot (D_0^d[\ideal{F^h}]) = D_0^{d-e}[\ideal{F^h} : \ideal{g^h}] \]
 where $e$ is the degree of $g^h$.
 
 We will make use of this for a local ideal membership test using the homogenized dual space.  Let $I$ be an ideal of the local ring $R_0$.  If $g$ is not in $I$ then at some degree the Hilbert functions of $I$ and $I + \ideal{g}$ will differ.  We can compute the values of the Hilbert function for successive degrees using the dual space.  If $g$ is in $I$ then $I:\ideal{g} = R_0$.  This can be checked by computing $D_0^d[\ideal{F^h} : \ideal{g^h}]$ for some $d$ and seeing that $h^d$ is in its initial ideal.  This implies that there is some $f \in \ideal{F^h} : \ideal{g^h}$ with $\dehomog(\LT f) = 1$.  Running both tests simultaneously for successive degrees $d$ guarantees termination.
 
\begin{algorithm}\label{alg:IdealMembership} $B = \alg{IdealMembership}(F,g)$
\begin{algorithmic}
\REQUIRE $I = \ideal{F}$, an ideal of $R$;\\
$g$, a polynomial in $R$.
\ENSURE $B = (g \in IR_0)$, a Boolean value.\\
\smallskip \hrule \smallskip

\STATE $e \gets \deg g^h$;
\STATE $d \gets 0$;
\LOOP
 \STATE $D_1 \gets D_0^d[I]$;
 \STATE $D_2 \gets D_0^d[I+\ideal{g}]$;
 \IF{$D_1 \neq D_2$}
  \STATE {\bf return} $false$;
 \ENDIF
 \STATE $C \gets g^h\cdot D_0^{d+e}[\ideal{F^h}]$;
 \IF{$h^d \in \LTg C$}
  \STATE {\bf return} $true$;
 \ENDIF
 \STATE $d \gets d+1$;
\ENDLOOP

\smallskip \hrule \smallskip
\end{algorithmic}
\end{algorithm}

Algorithm~\ref{alg:IdealMembership} fills in the gap left by the local ideal membership test proposed in Theorem~$4.6$ of~\cite{Leykin:NPD}, which missed the necessary assumption of homogeneity. 

\section{Numerical Primary Decomposition}\label{Sec:NPD}

There is a handful of methods for {\em symbolic} primary decomposition with implementations carried out for decomposition over $\bQ$. For a good overview see~\cite{Decker-Greuel-Pfister}.
  
A method for {\em numerical primary decomposition} (NPD) was introduced in~\cite{Leykin:NPD} and is intended to compute an {\em absolute} primary decomposition, i.e., decomposition over $\bC$. Conceptually it is set up in a framework where one can't use the symbolic techniques such as Gr\"obner bases and characteristic sets. 

\medskip

The following construction, inspired by the higher-order deflation \cite{LVZ-higher}, computes a superset of the primary components of an ideal. Consider an ideal $I=(f_1,\ldots,f_N)\subset R=\bC[x]$. Let $q = \sum_{|\beta|\leq d} a_\beta \p^\beta \in \bC[a][\p]$ be a linear differential operator of order at most $d$ with coefficients in the polynomial ring $\bC[a]$. Note there is a natural action of $\bC[a][\p]$ on $\bC[a][x]$.

The ideal generated by $f_1,\ldots,f_N$ and $q (x^\alpha f_i)$ for all $|\alpha|\leq d-1$ and $i=1,\ldots,N$ is called the {\em deflation ideal} of $I$ of order $d$ and denoted by~$\dId$.

We also refer to the {\em deflated variety} of order $d$,  $$\dXd = \bV(\dId) \subset \bC^{B(n,d)},$$
where $B(n,d)= n+\binom{n+d-1}{d}$ is the number of variables in $\bC[x,a]$.

The deflation ideal $\dId$ and, therefore, the deflated variety $\dXd$ does not depend on the choice of generators of the ideal $I$ (see~\cite[Proposition 2.7]{Leykin:NPD}).

Denote by $\pi_d : \dXd \to X$ the restriction of the natural projection from $\bC^{B(n,d)}$ to $\bC^n$. Note that this map is a surjection onto $X = X^{(0)} = \bV(I)$.

\begin{remark} For every point $x \in \bC^n$ the fiber of $\pi_d$ is isomorphic to the truncated dual space of order $d$, i.e., $$\pi_d^{-1}(x) \simeq D_x^{d}(I).$$
\end{remark}

The following statement enables us to compute all (including embedded) components associated to $I$.
\begin{theorem}[Theorem~3.8~of~\cite{Leykin:NPD}]\label{thm: visible deflation} Every component is {\em visible} at
some order $d$, i.e., for every prime $P\in \Ass(R/I)$, there exists $d$ such that the preimage  $\dYd = \pi_d^{-1}(Y)$ of the variety $Y=\bV(P)$ is an irreducible (isolated) component of the variety $\dXd = \bV(\dId)$.
\end{theorem}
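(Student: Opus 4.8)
The plan is to analyze the deflation construction fiber-by-fiber over the base variety $X = \bV(I)$ and to exploit the remark immediately preceding the statement: the fiber of $\pi_d$ over a point $x$ is the truncated dual space $D_x^d(I)$. First I would fix an associated prime $P \in \Ass(R/I)$, set $Y = \bV(P)$, and pick a generic point $y$ on $Y$ in the sense of the paper's footnote. Localizing at $y$, the ideal $IR_y$ has a primary component with radical $PR_y$; the key quantitative input is that the local Hilbert function of the $P$-primary part stabilizes, so there is a finite order $d$ beyond which the truncated dual space $D_y^d(IR_y)$ already "sees" the full structure of the $P$-primary component at $y$ — more precisely, $d$ should be taken to exceed the regularity index $\rho_y$ of the relevant local ideal. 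I would choose $d$ this way and claim that this is the order at which $Y$ becomes visible.

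The second step is to show that, for this $d$, the preimage $\dYd = \pi_d^{-1}(Y)$ is irreducible of the expected dimension and is a component of $\dXd$. Irreducibility and the dimension count follow from the fiber description: over the generic point of $Y$ the fiber is the vector space $D_y^d(IR_y)$, whose dimension is constant (equal to $\sum_{k\le d} H(k)$ for the local Hilbert function $H$ of the $P$-primary component) as $y$ ranges over a Zariski-dense open subset of $Y$; hence $\dYd$ contains a dense open subset that is a vector bundle over an open subset of $Y$, which is irreducible, and taking closure gives that $\dYd$ is irreducible of dimension $\dim Y + (\text{that fiber dimension})$. This part is essentially bookkeeping once the fiber-dimension semicontinuity is in hand.

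The main obstacle — and the heart of the argument — is showing that $\dYd$ is actually an \emph{isolated} component of $\dXd$, i.e., that no other component of $\dXd$ meets the generic part of $\dYd$. Equivalently, one must rule out that the generic fiber dimension of $\pi_d$ over $Y$ jumps up because of contributions from \emph{other} associated primes $Q$ whose variety $\bV(Q)$ contains $y$ but is properly contained in $Y$, or from $Q$ with $\bV(Q) \supsetneq$ intersecting. The resolution is exactly the regularity-index choice of $d$: for a generic $y \in Y$, the only associated primes of $I$ whose vanishing locus passes through $y$ and that affect $D_y^d$ are those $Q$ with $\bV(Q) \supseteq Y$; since $y$ was chosen generic on $Y$ and $P$ is the *unique* such prime that is "tight" at the generic point (any other embedded $Q$ with $\bV(Q)\supsetneq\bV(P)$... wait — must handle $Q$ with $\bV(Q)\subsetneq\bV(P)$ not passing through generic $y$). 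So at generic $y$ the local ideal $IR_y$ has only the $P$-primary component, the dual space $D_y^d(IR_y) = D_y^d(IR_y + (\text{irrelevant}))$ genuinely stabilizes, and the fiber dimension over a neighborhood of generic $y$ in $X$ \emph{cannot} exceed the generic fiber dimension over $Y$ — giving that $\dYd$ is not contained in the closure of any larger-dimensional stratum. I would make this precise by combining the decomposition $D_y^d[I] = \bigcap D_y^d[Q\text{-primary}]$, the fact that only finitely many associated primes are involved, and an upper-semicontinuity argument for $\dim \pi_d^{-1}(x)$ along $X$, concluding that $\dYd$ is both open and closed in its image-stratum, hence a connected component, hence (being irreducible) an irreducible component of $\dXd$. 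The delicate point to get right is the interaction with \emph{lower}-dimensional embedded primes contained in $Y$: these do not pass through a generic $y\in Y$, so they do not inflate $D_y^d$ there, but one must verify they also do not create a separate component of $\dXd$ whose closure swallows the generic part of $\dYd$ — which again follows because such a component projects into a proper closed subset of $Y$.
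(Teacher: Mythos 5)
You are proving a statement that this paper only imports: it is quoted as Theorem~3.8 of the cited numerical primary decomposition paper, and no proof appears in the present text, so I can only judge your argument on its own terms. Your set-up is the right one: the fiberwise description $\pi_d^{-1}(x)\simeq D_x^d[I]$, irreducibility of the preimage via its generic vector-bundle structure over the irreducible base $Y$, and the reduction of ``isolated'' to showing that $\pi_d^{-1}(Y)$ is not contained in the closure of $\pi_d^{-1}(Z\setminus Y)$ for the finitely many relevant irreducible $Z$ with $Y\subsetneq Z\subseteq X$.

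The gap is in that last, essential step. What you actually invoke there is upper semicontinuity of $x\mapsto\dim_\bC D_x^d[I]$, i.e.\ $\dim D_z^d[I]\le\dim D_y^d[I]$ for generic $z\in Z$ specializing to generic $y\in Y$. That inequality holds for \emph{every} $d$, including $d=0$, where $X^{(0)}=X$ and an embedded $Y$ is certainly not an isolated component; so it cannot by itself give the conclusion. The obstruction is that $\overline{\pi_d^{-1}(Z\setminus Y)}$ has dimension $\dim Z+\dim D_z^d[I]$, while $\dim\pi_d^{-1}(Y)=\dim Y+\dim D_y^d[I]$; to rule out absorption one needs, for $d\gg0$ and every intermediate $Z$, the quantitative gap
\[
\dim D_y^d[I]-\dim D_z^d[I]\;\ge\;\dim Z-\dim Y ,
\]
which is precisely where the hypothesis $P\in\Ass(R/I)$ must do real work (e.g.\ through the strict inclusion $D_y[J]\subsetneq D_y[I]$, with $J$ the intersection of the primary components at the associated primes properly contained in $P$, and an estimate of how fast the difference grows with $d$). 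Your prescription ``take $d$ past the regularity index'' is not shown to deliver this: the regularity index controls when the local Hilbert function becomes polynomial, not the size of the gap between the two fiber dimensions, and in the positive-dimensional case $D_y^d[I]$ never ``stabilizes'' at all. As written, your argument uses associatedness only to assert that a $P$-primary component exists locally, and would equally ``prove'' that the generic point of an arbitrary irreducible subvariety of $X$ becomes visible, which is false. Until the displayed inequality (or a substitute such as non-containment of the fiber $D_y^d[I]$ in the limit of the fibers $D_z^d[I]$) is established, the proof is incomplete at its central point.
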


The term ``visible'' reflects the tool that is used to ``see'' components: {\em numerical irreducible decomposition} (NID) algorithms such as in~\cite{SVW1}, which can detect isolated components numerically.

We call an isolated component $\dYd$ of $\dXd$ a {\em pseudocomponent} if $\pi_d(\dYd)$ is not a component of $X$.
We call pseudocomponents and embedded components of $X$ collectively {\em suspect components}.

Here is an outline of Algorithm~5.3 of~\cite{Leykin:NPD} that computes a superset of all associated components.
\begin{algorithm}\label{alg:NPD} $\calN = \operatorname{NPD}(I)$
\begin{algorithmic}
\REQUIRE $I$, ideal of $R$.
\ENSURE $\calN$, components associated to $I$.
\smallskip \hrule \smallskip

\STATE $\calN \gets \emptyset$
\STATE $d \gets 0$
\REPEAT
 \STATE $C_1 \gets$ isolated components of $\dId$ computed with an $\operatorname{NID}$ algorithm
 \STATE $C_2 \gets \big\{ Y\in C_1 \,|\, \pi_d(Y)\neq Z \mbox{ for all } Z \in \calN \big\}$
 \FORALL {$Y \in C_2$}
   \IF{ $Y$ is not a {\em pseudocomponent}}
      \STATE $\calN \gets \calN \cup \{ Y \}$
   \ENDIF
 \ENDFOR
 \STATE $d = d+1$;
\UNTIL{ a stopping criterion holds for $d$ }

\smallskip \hrule \smallskip
\end{algorithmic}
\end{algorithm}

There are two parts of the algorithm that need clarification:
a routine to determine whether a subvariety of $X$ is a pseudocomponent (the main topic of this article) and
a stopping criterion. A stopping criterion can be provided by bounding $d$ by the regularity index of the (global) Hilbert function.  The {\em a priori} bound is doubly exponential in the number of variables and, while demonstrating termination, is not practical. 


\begin{remark}
Isosingular decomposition \cite{Hauenstein-Wampler:isosingular} can also be used as a source of suspect components, although it is not known whether the procedure one may derive from the isosingular decomposition recovers all embedded components. 

Another way to produce suspect components is via iterated first-order deflation: consider the projections of the visible components of $X^{(1)}$, $(X^{(1)})^{(1)}$, etc.
\end{remark}

\section{Algorithms to detect embedded components}\label{Sec:embedded-test}

The problem of distinguishing embedded components from pseudocomponents can be condensed to the following.

\begin{problem}\label{problem:embedded-test}
Consider an ideal $I\subset R$ and a prime ideal $P\supset I$. Let $Q_1,\ldots,Q_r\supset I$ be the primary ideals in a primary decomposition of $I$ such that $\sqrt{Q_i}\subsetneq P$.

Given generators of $I$ and generic points $y_0\in\bV(P)$ and $y_i\in \bV(Q_i)$ ($i=1,\cdots,r$),
determine whether $P$ is an associated prime of $R/I$.

\smallskip

Equivalently, let $y_0=0\in \Var(P)$ be a generic point (by changing coordinates we may assume the origin is a generic point without loss of generality), determine whether
\begin{equation}\label{equ:no-pseudo}
IR_0 = Q_1R_0 \cap \cdots \cap Q_r R_0.
\end{equation}
\end{problem}

We describe an algorithm for when the suspect component $P$ is zero-dimensional, and then finally extend it to the fully general case.

\subsection{Suspect component of dimension $0$}\label{sec:0-dim-suspect}
Suppose the suspect component is of dimension $0$. Without loss of generality we may assume that it is the origin by a change of coordinates and also that $I = IR_0 \cap R$ because we may ignore components away from the origin.  We again let $\fm = \ideal{x_1,\ldots,x_N}$, the maximal ideal at the origin.  To simplify our notation, let $I = Q_0\cap J$ where $J = Q_1\cap \cdots \cap Q_r$ (as in Problem~\ref{problem:embedded-test}) and either
\begin{itemize}
  \item $Q_0 = R$, i.e., $V_0$ is a pseudocomponent; or
  \item $Q_0$ is a primary ideal with $\sqrt{Q_0}=\fm \in\Ass(R/I)$ and $Q_0$ does not contain $J = Q_1\cap \ldots \cap Q_r$, i.e., $V_0$ is a (true) component. 
\end{itemize}
The goal is to distinguish the two cases above.  Is $I = J$ or not?

We have $J = (I:\fm^{\infty})$.  For a generic linear form $\ell$ (so $\ell \notin \sqrt{I}$), the ideal $\ideal{\ell}$ is contained in $\sqrt{Q_0}$ but not in $\sqrt{Q_1},\ldots,\sqrt{Q_r}$, so then $J = (I:\ideal{\ell}^\infty )$.  This gives inclusions
 \[ I \subseteq (I:\ideal{\ell}) \subseteq J \]
with equality at the first inclusion if and only if there is no embedded component of $I$ at the origin.  Our general strategy will be to compute information about $I:\ideal{\ell}$ and $J$ and compare to $I$ in order to certify either that $I = I:\ideal{\ell}$ in which case there is no embedded component, or that $I \neq J$ in which case there is.

A major stumbling block is that we cannot get our hands directly on $I:\ideal{\ell}$ or $J$, or even on their truncated dual spaces.  In the former case, as discussed in Section \ref{Sec:eliminating}, we can compute $S_d := \ell\cdot D_0^{d+1}[I]$ which is a subspace of $D_0^d[I:\ideal{\ell}]$.  If for large enough $d$, $S_d$ contains all s-corners of $D_0[I]$, then we conclude that $D_0[I:\ideal{\ell}] = D_0[I]$, certifying that the origin is not embedded, but we cannot use this test to certify the origin is embedded.  On the other side, we compute subspaces $J_d := J \cap R_d$ of $J$, where $R_d$ denotes the space of polynomials with all terms of degree $\leq d$.  If $J_d \not\subset I$ for some $d$ then this certifies that the origin is embedded.  Similarly as $J_d$ is only a subset of $J$, we cannot use it to certify the origin is a pseudocomponent.  Both procedures are simultaneously iterated over $d$ until one terminates.

This algorithm is below, with the procedure $\alg{IdealTruncation}$ to compute $J_d$ defined later as Algorithm \ref{alg:IdealTruncation}. 
To find $\LT I$ (in particular, the s-corners of the staircase) we use the algorithm of~\cite{Krone:dual-bases-for-pos-dim}; see Remark~\ref{rem:HF-algo}.

\begin{algorithm}\label{alg:embedded-test-dim-0} $B = \operatorname{IsOriginEmbedded}(I)$
\begin{algorithmic}[1]
\REQUIRE $I = \ideal{F}$, an ideal of $R$
\ENSURE $B = \text{``origin is an embedded component of $I$''}$, a boolean value.
\smallskip \hrule \smallskip

\STATE compute $\LT I$
\STATE $d \gets 0$
\STATE $\ell \gets$ a generic linear form
\LOOP
  \STATE $J_d \gets \alg{IdealTruncation}(F,d)$ \label{line:run-B}
  \IF{$\LT J_d \not\subset \LT I$} \label{line:polys up to d}
  \STATE {\bf return} true
  \ENDIF \label{line:end-run-B}
  \STATE $S_d\gets \ell \cdot D^{d+1}_0[I]$
  \IF{$\p^{\alpha}\in \LTg S_d$ for all s-corners $x^\alpha$ of $\LT I$}\label{line:s-corners}
  \STATE {\bf return} false
  \ENDIF
  \STATE $d \gets d+1$
\ENDLOOP

\smallskip \hrule \smallskip
\end{algorithmic}
\end{algorithm}

\medskip
\begin{proof}[Proof of correctness and termination]
If the condition in Line~\ref{line:polys up to d} holds then there is some $f \in J_d \subset J$ such that $f\notin I$. Hence $J \neq I$ which implies the origin is an embedded component.  Because $J = \bigcup_d J_d$,  if $J \neq I$ then there is large enough $d$ for which $J_d$ will provide such a certificate.

Suppose $I \neq J$ and let $M_I$ denote the set of standard monomials of $I$.  Because $I$ and $I:\ideal{\ell}$ differ only by a component at the origin, $(I:\ideal{\ell})/I$ has finite $\bC$ dimension, and so $M_I \setminus M_{I:\ideal{\ell}}$ is also finite.  $M_{I:\ideal{\ell}}$ is closed under division, so $M_I \setminus M_{I:\ideal{\ell}}$ contains a monomial which is maximal in $M_I$, which is an s-corner of $I$.  Therefore if the condition in Line~\ref{line:s-corners} holds then $I=I:\ideal{\ell}$.  Because $D_0[I:\ideal{\ell}] = \bigcup_d S_d$, if $I = I:\ideal{\ell}$ then there is large enough $d$ for which $S_d$ will provide such a certificate.
\end{proof}

The staircases of $J$ and $I:\ideal{\ell}$ sit ``below'' the staircase of $I$.  Since $J_d$ is a subset of $J$, it provides an upper bound on the staircase of $J$, which can bound it away from $I$, proving that $J \neq I$.  On the other hand, since $S_d$ is a subset of $D_0[I:\ideal{\ell}]$, it provides a lower bound on the staircase of $I:\ideal{\ell}$.  If it includes the s-corners of $I$, then the staircases must agree.  See Figure \ref{fig:I-J-Jd}.

\begin{figure}[ht]
  \centering
  \includegraphics[width=.5\columnwidth]{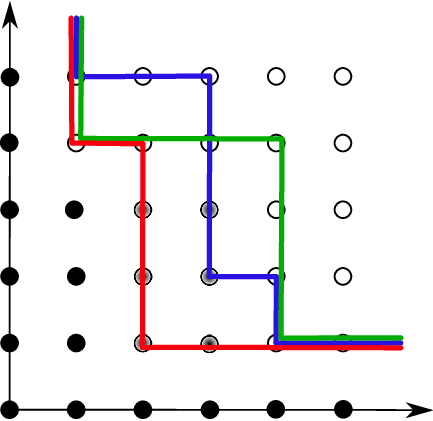}
  \caption[$I,J,J_d$]{Both $\Green{I}$ and $\Blue{\ideal{J_d}}$ are contained in $\Red{J}$. In general, no other containments hold. For $d\gg 0$, $\ideal{J_d}=J$.
  
  The set $\LT J \setminus \LT I$ of monomials is finite.}
  \label{fig:I-J-Jd}
\end{figure}

\subsubsection{Ideal truncation algorithm}

To complete Algorithm \ref{alg:embedded-test-dim-0} it remains to produce an algorithm for ideal truncations.

\begin{problem}[Local Interpolation]\label{problem:LocalInterpolation}
Let $d>0$ and $J = Q_1 \cap \cdots \cap  Q_r$  with each $Q_i$ a primary ideal such that each $V_i = \bV(Q_i)$ contains the origin (equivalently $J = JR_0 \cap R$).  Compute $J_{d} = J \cap R_d$.

We assume access to oracle $\calO_J$ which can sample random generic points $x$ on any $V_i$, and for any such $x$ and any $e \geq 0$ can compute $D_x^e[J]$.
\end{problem}

\begin{remark}
 We can use the tools of NPD to sample points on the suspect components of $I = J\cap Q_0$, which in particular means generic points on $\bV(Q_i)$ can be produced.  We can also compute truncated dual spaces $D_x^e[I]$ using the generators of $I$.  The local properties of $J$ and $I$ agree away from the origin and the origin is not a primary component of $J$.  Therefore simply by excluding the origin from consideration, we have access to the tools promised by $\calO_J$ and our oracle assumption is justified.
\end{remark}

To solve Problem \ref{problem:LocalInterpolation} we will use a form of interpolation.  We will sample generic points $x$ on the components of $J$, and compute dual spaces $D_x^e[J]$, which provide certain linear constraints on the evaluation and derivatives of polynomials $f \in JR_x$.  Finally we require a check to know when we have enough constraints to exactly define $J_d$.

For the general case, we first consider the {\em double truncations} of $J$:
\begin{equation}\label{eq:I-d-e}
J_d^e = \{ f\in R_d ~|~ \mbox{for all } i,\ D_{x}^e[Q_i] f = 0
\mbox{ for any generic point } x\in V_i \}.
\end{equation}
The following is a probabilistic algorithm to compute $J_d^e$ whenever we have a procedure to compute $D_x^e[J]$ for any generic point $x \in Q_i$ and any $e$.  In our case we have access to such a procedure because for any point $x$ away from the origin $D_x^e[J] = D_x^e[I]$.  Note $D_x^e[I]$ can be computed by the usual methods since the generators of $I$ are known.

\begin{algorithm}\label{alg:TruncatedTruncation}
$J_d^e = \TruncatedTruncation(\calO_J,d,e)$
\begin{algorithmic}
\REQUIRE $\calO_J$ an oracle as in Problem~\ref{problem:LocalInterpolation};\\
$d,e\in\bN$.\\
\ENSURE $J_d^e$ is as defined in (\ref{eq:I-d-e})
\smallskip \hrule \smallskip
\STATE $K \gets R_d$
\REPEAT
\STATE $oldK \gets K$
\STATE with $\calO_J$ choose generic points $x_i \in V_i$ for $i = 1,\ldots,r$.
\STATE $K \gets K \cap (D_{x_1}^e[J])^\perp \cap \cdots \cap (D_{x_r}^e[J])^\perp$
\UNTIL{$oldK = K$}
\STATE {\bf return} $J_d^e = K$
\smallskip \hrule \smallskip
\end{algorithmic}
\end{algorithm}
\begin{proof}[Proof of correctness and termination]
 Note that at every step $K \supseteq J_d^e$.  Suppose at some step that $K \neq J_d^e$.  There is $f \in K$ such that for some $V_i$ and any generic point $x \in V_i$, $f$ is not orthogonal to $D_x^e[J]$ by the definition of $J_d^e$.  The point $x_i$ chosen on $V_i$ is chosen generically, so the new value of $K$ is strictly contained in $oldK$.  Therefore when $K$ stabilizes, it must be equal to $J_d^e$.  Since $K$ is finite dimensional at every step, termination is guaranteed.
\end{proof}

\begin{proposition}\label{prop:truncation-stab}
 For any $d$, the chain
 \[ J_d^0 \supseteq J_d^{1} \supseteq J_d^2 \supseteq \cdots \]
 stabilizes to $J_d$.  That is, $J_d^e = J_d$ for all $e$ sufficiently large.
\end{proposition}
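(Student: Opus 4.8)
The plan is to show two inclusions for each fixed $d$: first that $J_d^e \supseteq J_d$ for every $e$, and second that $J_d^e \subseteq J_d$ once $e$ is large enough, with the chain being nested and decreasing in $e$ being essentially obvious. For the monotonicity $J_d^e \supseteq J_d^{e+1}$, note that the defining condition of $J_d^{e+1}$ in \eqref{eq:I-d-e} imposes $D_x^{e+1}[Q_i]f = 0$, and since $D_x^e[Q_i] \subseteq D_x^{e+1}[Q_i]$ this is a stronger constraint; hence membership in $J_d^{e+1}$ implies membership in $J_d^e$. For the inclusion $J_d \subseteq J_d^e$ for all $e$: if $f \in J = \bigcap_i Q_i$ then $f \in Q_i \subseteq Q_i R_x$ for each $i$ and each point $x \in V_i$, so every differential functional in $D_x^e[Q_i]$ annihilates $f$; combined with $f \in R_d$ this gives $f \in J_d^e$.

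The substance is the reverse inclusion $J_d^e \subseteq J_d$ for $e \gg 0$. Fix a generic point $x \in V_i$. The key fact, already available in the excerpt, is that a polynomial $f \in R_x$ lies in $Q_i R_x$ if and only if $q(f) = 0$ for all $q \in D_x[Q_i]$ (Remark~\ref{rmk:local-dual}), and that membership in the full dual space can be detected by a \emph{truncated} dual space once we truncate high enough — more precisely, because the quotient $R_x/Q_i R_x$ is, at a generic point of $V_i$ of dimension $d_i$, governed by the local Hilbert function, there is a regularity-type bound $e_i(x)$ so that $D_x^{e_i}[Q_i]$ already captures all the constraints relevant to polynomials of degree $\leq d$. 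Concretely, for $f \in R_d$, if $D_x^{e}[Q_i]f = 0$ with $e \geq e_i$ then in fact $D_x[Q_i]f = 0$, because any functional in $D_x[Q_i]$ of order exceeding $e$ that did not vanish on $f$ would, after differentiating down, already manifest as a nonvanishing functional of order $\leq e$ on some $x^\beta f$ — but this requires care since $f$ has bounded degree and we are working at a single point. The cleaner route is: by Remark~\ref{rmk:local-dual} applied componentwise, $D_x^e[Q_i]f = 0$ for all generic $x\in V_i$ says exactly that $f$ vanishes to order $e$ along $V_i$ in the sense of the ideal $\bigcap_{x} (Q_i + \fm_x^{e+1})$; and the intersection $\bigcap_e \bigl(\bigcap_x(Q_i+\fm_x^{e+1})\bigr)$ recovers $Q_i$ (a Krull-intersection / primary-ideal argument, using that $Q_i$ is $\sqrt{Q_i}$-primary and $V_i$ passes through a generic point). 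Since $R_d$ is finite-dimensional, the descending chain $\{f \in R_d : D_x^e[Q_i]f=0 \ \forall x\}$ stabilizes at finite $e$, and its stable value is $Q_i \cap R_d$. Taking the intersection over $i$, the chain $J_d^e$ stabilizes to $\bigcap_i (Q_i \cap R_d) = J \cap R_d = J_d$.

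The main obstacle I anticipate is making the stabilization argument uniform over the (infinitely many) generic points $x \in V_i$: one must argue that a single $e$ works for all sufficiently generic $x$ simultaneously, rather than an $e$ depending on $x$. I would handle this by a semicontinuity / constructibility argument — the function $x \mapsto \dim_\bC D_x^e[Q_i]$ is constant on a Zariski-dense open subset of $V_i$, and the relevant regularity index $\rho$ (from \S\ref{Sec:Hilbert}) is likewise generically constant along $V_i$ — so the bound $e_i$ can be taken from the generic fiber. Alternatively, and perhaps more in the spirit of the paper, one can sidestep uniformity entirely: the finite-dimensionality of $R_d$ forces the descending chain $J_d^0 \supseteq J_d^1 \supseteq \cdots$ to stabilize regardless, so it suffices to identify the stable limit, and the limit is characterized by "$D_x^\infty[Q_i]f=0$ for all generic $x$", which by Remark~\ref{rmk:local-dual} is precisely $Q_i\cap R_d$. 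I would present the proof in that order — monotonicity, $J_d \subseteq J_d^e$, finite-dimensional stabilization, identification of the limit via duality — and relegate the regularity bound to a remark.
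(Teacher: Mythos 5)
Your final route (monotonicity, the easy inclusion $J_d \subseteq J_d^e$, stabilization forced by $\dim_\bC R_d < \infty$, and identification of the stable limit via Remark~\ref{rmk:local-dual} as $\bigcap_i(Q_iR_x\cap R)\cap R_d = J_d$) is correct and is essentially the paper's own proof, which likewise picks one generic point per $V_i$, invokes the duality $f\in Q_iR_x\cap R \iff D_x[Q_i]f=0$, and concludes by finite-dimensionality. The regularity-index/Krull-intersection detour in your middle paragraph is unnecessary, as you yourself note, and can be dropped.
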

\begin{proof}
 For any point $x$ recall from Remark \ref{rmk:local-dual} that polynomial $f$ has $p(f) = 0$ for all $p \in D_{x}[I]$ if and only if $f \in IR_{x} \cap R$, and note that $IR_{x} \cap R = \bigcap_{x \in V_i} Q_i$.  Choosing a point $x_i$ from each $V_i$, the set $\bigcup_e J_d^e$ is the set of polynomials $f \in R_d$ orthogonal to each dual space $D_{x_i}[I]$.  Because every $V_i$ contains at least one of the points $x_1,\ldots,x_r$,
  \[ \bigcap_i (D_{x_i}[I])^\perp = Q_1 \cap \cdots \cap Q_r = J. \]
 Therefore $\bigcup_e J_d^e = J_d$.  Since $J_d$ has finite $\bC$-dimension, there must be some $e$ at which stabilization occurs.
\end{proof}

This fact suggests an algorithm for computing $J_d$ from the double truncations, in particular for each value of $e \geq 0$ compute $J_d^e$ until some $J_d^e \subseteq J$.  A naive stopping criterion for this procedure might be when $J_d^e = J_d^{e+1}$ for some $e$, but this will not work as the following example illustrates.

\begin{example}\label{exa:double-truncation-non-containment}
Let $I = \ideal{x^k+y, y^k} \subset R = \bC[x,y,z]$, a positive-dimensional primary ideal. The reader may check that
\begin{align*}
I_1^1 &= y\\
I_1^2 &= y\\
&...\\
I_1^k &= I_1 = 0
\end{align*}
This example shows that equality of two subsequent $I_d^e$ and $I_d^{e+1}$ is not a valid stopping criterion. Also, note that $I_1^e \not\subset I$ for $e<k$.
\end{example}

Instead we require a method to check if $J_d^e \subseteq J$.  First note that for any $\bC$-vector subspaces $V$ and $W$ with $V$ finite dimensional, a generic vector $v \in V$ is in $W$ if and only if $V \subseteq W$.  Therefore it is sufficient for our purposes to check if a randomly chosen polynomial $g \in J_d^e$ is contained in $J$.  Such a membership test was described in Algorithm \ref{alg:IdealMembership} when generators for the ideal were known, but in this case we do not know generators of $J$, only for $I$, so the algorithm must be modified.

\begin{proposition}\label{prop:colon-dim0} Let $I \subseteq R$ be an ideal and $J = (I:\fm^{\infty})$.  A polynomial $g \in R$ is in $J$ if and only if $\sqrt{I : \ideal{g}} = \fm$.
 \end{proposition}

 \begin{proof}
  Let $I=Q_0\cap Q_1 \cap \cdots \cap Q_r$ be a primary decomposition with $\sqrt{Q_i)} \neq \fm$ for $i >0$ and $\dim Q_0 = 0$ or $Q_0 = R$.  Let $J = (I:\fm^{\infty}) = Q_1 \cap \cdots \cap Q_r$.
 
  If $g \notin J$, then $g \notin Q_i$ for some $i > 0$, so $I : g \subset P_i$ where $P_i$ is the prime associated to $Q_i$.  Since $P_i$ has positive dimension, so does $I:\ideal{g}$.  Conversely if $I: \ideal{g}$ is positive-dimensional, it is contained in some positive-dimensional prime $P$.  Then $I$ has a primary component $Q_i$ with $Q_i \subset P$ and $g \notin Q_i$.  Since $Q_i \subset P$, it has positive dimension so $g \notin J$.
 \end{proof}

 To check that this condition holds we use the dual space of $\ideal{F^h}:\ideal{g^h}$, where $I = \ideal{F}$, to find g-corners of $I:\ideal{g}$, just as in Algorithm \ref{alg:IdealMembership}.  $I:\ideal{g}$ is zero-dimensional if and only if for every variable $x_i$ there is a g-corner of $I:\ideal{g}$ of the form $x_i^a$.
 
 The algorithm we present searches for g-corners of the form $x_i^a$ iterating over the degree, but cannot prove the non-existence of such g-corners.  As a result, our algorithm to determine if $g \in J$ will stop at some cutoff degree $c$, return true if it can certify that $g \in J$, and return false if the cutoff value is reached.

\begin{algorithm}\label{alg:CheckCandidate} $B = \IsWitnessPolynomial(F,g,c)$
\begin{algorithmic}
\REQUIRE $I = \ideal{F}$, an ideal of $R$;\\
$g$, a polynomial in $R$;\\
$c$, a degree cutoff.
\ENSURE $B = \text{$true$ iff $g \in (I:\fm^\infty)$ and $c$ sufficiently large}$.\\
\smallskip \hrule \smallskip

\STATE $e \gets \deg g^h$
\STATE $d \gets 0$
\STATE $G \gets \{\}$ (the g-corners of $I:\ideal{g}$)
\REPEAT
 \STATE $C \gets$ new g-corners of $I:\ideal{g}$ computed from $g^h\cdot D_0^{d+e}[\ideal{F^h}]$
 \STATE append $C$ to $G$
 \IF{$x_i^{a_i} \in G$ for all $i = 1,\ldots ,n$ and any $a_i$}
  \STATE {\bf return} $true$
 \ENDIF
 \STATE $d \gets d+1$
\UNTIL{$d > c$}
\STATE {\bf return} $false$

\smallskip \hrule \smallskip
\end{algorithmic}
\end{algorithm}

Equipped with this algorithm for checking if a polynomial $g$ is in $J$, and the double truncation algorithm above, we can now compute $J_d$ as follows.

\begin{algorithm}\label{alg:IdealTruncation}
$J_d = \alg{IdealTruncation}(F,d)$
\begin{algorithmic}
\REQUIRE $I = \ideal{F}$, an ideal of $R$;\\
$d\in\bN$.\\
\ENSURE $J_d = (I:\fm^{\infty}) \cap R_d$
\smallskip \hrule \smallskip
\STATE $e\gets 0$
\LOOP
\STATE $J_d^e \gets \TruncatedTruncation(\calO_J,d,e)$
\STATE $g \gets $ random polynomial chosen from $J_d^e$
  \IF{$\alg{IsWitnessPolynomial}(F,g,e)$}
  \STATE {\bf return} $J_d = J_d^e$
  \ENDIF
\STATE $e \gets e + 1$
\ENDLOOP
\smallskip \hrule \smallskip
\end{algorithmic}
\end{algorithm}

\begin{proof}[Proof of correctness and termination]
 If $\alg{IsWitnessPolynomial}(F,g,e)$ returns true then $g$ must be in $J_d$.  By Proposition \ref{prop:truncation-stab} $J_d^e \supseteq J_d$, so randomly chosen $g$ from $J_d^e$ has $g \in J_d$ if and only if $J_d^e = J_d$ almost surely.  This proves correctness.
 
 To prove termination, first note that there is $e_0$ such that $J_d^e = J_d$ for all $e\geq e_0$ by Proposition \ref{prop:truncation-stab}.  It remains to show that $\alg{IsWitnessPolynomial}(F,g,e)$ will return true for some $e\geq e_0$.

 For any $g \in J_d$, let $c(g)$ denote the minimum cutoff value $c$ such that $\IsWitnessPolynomial(F,g,c)$ returns true.  Let $\{b_1,\ldots,b_s\}$ be a $\bC$-basis for $J_d$, so we can express $g \in J_d$ as $g = \sum_{i=1}^s a_ib_i$.  For any given value of $c$, the set of polynomials 
  \[ W_c = \{g \in J_{d} \mid c(g) = c \} \]
 can be described by a finite set of algebraic conditions on $a_1,\ldots,a_s$, so $W_c$ is a constructible set.  In particular, there is some $c_0$ such that $W_{c_0}$ is Zariski open, so $\IsWitnessPolynomial(F,g,c_0)$ will return true for generic $g \in J_{d}$.  For $e \geq \max\{e_0,c_0\}$, a generic polynomial $g$ sampled from $J_{d}^e$ will be in $J_d$, and $\IsWitnessPolynomial(F,g,e)$ will certify this fact.
\end{proof}

This completes Algorithm \ref{alg:embedded-test-dim-0} for determining if the origin is a zero-dimensional embedded component of ideal $I$.

\subsubsection{An example computation}
\begin{example}\label{example:cyclic}
Consider the cyclic 4-roots problem:
\begin{align*}
I &=  \big( x_1+x_2+x_3+x_4,\ x_1x_2+x_2x_3+x_3x_4+x_4x_1,\\
  & x_1x_2x_3+x_2x_3x_4+x_3x_4x_1+x_4x_1x_2,\ x_1x_2x_3x_4-1 \big )\,.
\end{align*}

Computing {\tt numericalIrreducibleDecomposition} of the first-order deflated variety $X^{(1)}=\Var(I^{(1)})$ 
we obtain witness sets representing isolated components of $X^{(1)}$ that project to 
\begin{itemize}
\item two irreducible curves, isolated components that are visible and can be discovered by  {\tt numericalIrreducibleDecomposition} of $X=\Var(I)$, and
\item eight points, approximations to $\{(a,b,-a,-b) \mid a\in\{\pm 1,\pm i\},\, b=\pm a \} $ which are {\em suspect} components.
\end{itemize}

For an approximation of the point $(i, -i, -i, i)$, {\tt isPointEmbedded} produces a witness polynomial,
\begin{verbatim}
  witness poly: (d',d) = (1, 4)
  (.586169+.361093*ii)*x_1+(.776351+.36685*ii)*x_2+
  (.586169+.361093*ii)*x_3+(.776351+.36685*ii)*x_4
\end{verbatim}
\noindent
showing that this point is an embedded component. Same conclusion holds for all suspect points. 

The associated primes (computed over $\bQ$ with a symbolic {\em Macaulay2}\, routine) are
\[
\begin{array}{rll}
  \Ass(R/I) =  \big\{ & (x_2+x_4,x_1+x_3,x_3x_4+1),\\
                    & (x_2+x_4,x_1+x_3,x_3x_4-1),\\
                    & (x_4-1,x_3+1,x_2+1,x_1-1),\\
                    & (x_4-1,x_3-1,x_2+1,x_1+1),\\
                    & (x_4+1,x_3+1,x_2-1,x_1-1),\\
                    & (x_4+1,x_3-1,x_2-1,x_1+1),\\
                    & (x_3+x_4,x_2+x_4,x_1-x_4,x_4^2+1),\\
                    & (x_3-x_4,x_2+x_4,x_1+x_4,x_4^2+1)& \big\}
\end{array}
\]
confirming the numerical results.
\end{example}

\subsection{Suspect component of positive dimension}\label{subsec:positive-dim-suspect}

Let $P_0$ be the vanishing (prime) ideal of suspect component $V_0$; let $d_0=\dim V_0 >0$.

We would like to deduce and rely on a Bertini-type theorem (Theorem~\ref{thm:Ass-of-a-generic-slice}) that, roughly, says that given an ideal $I\subset R$ with $\min_{P\in\Ass(R/I)} \dim P \geq d_0$ we have a correspondence between $\Ass(R/I)$ and $\Ass(R/(I+L))$ where $L$ is a generic affine plane of codimension $d_0$. This correspondence is one-to-one for components of dimension $d_0+1$; there could be multiple 0-dimensional components in $\Ass(R/(I+L))$ ``witnessing'' components of dimension $d_0$ in $\Ass(R/I)$.

\begin{lemma}\label{lem:colon-hyperplane-commute}
Let $I$ be an ideal and $f$ be an element of $R$. Then for a generic (affine) linear function $h\in R$
\[(I+H):F=(I:F)+H,\text{ where }F = \ideal{f}, H = \ideal{h}.\]
\end{lemma}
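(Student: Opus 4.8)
## Proof Proposal

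The plan is to prove the two inclusions separately. The easy direction is $(I:F)+H \subseteq (I+H):F$: if $gf \in I$ then $gf \in I+H$, and trivially $h \in (I+H):F$ since $h \in I+H$; this holds for any $h$, generic or not. The substance is the reverse inclusion $(I+H):F \subseteq (I:F)+H$, and this is where genericity of $h$ is used.

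For the hard inclusion, I would work modulo $H$, i.e., pass to the quotient ring $\bar R = R/H$, which (after a generic linear change of coordinates) is a polynomial ring in one fewer variable. Write $\bar I$ for the image of $I$ and $\bar f$ for the image of $f$. The claim $(I+H):F \subseteq (I:F)+H$ is equivalent to $\bar I : \ideal{\bar f} = \overline{(I:F)}$, i.e., that forming the colon ideal by $f$ commutes with cutting by a generic hyperplane. The standard tool here is to control the behavior of associated primes under a generic hyperplane section: by prime avoidance, a generic $h$ avoids lying in any associated prime of $R/I$ and any associated prime of $R/(I:F)$, and more importantly $h$ is a nonzerodivisor on the relevant modules. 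I would then express $I:F$ via a primary decomposition $I = \bigcap Q_i$ as $I:F = \bigcap_{f \notin Q_i} Q_i$ and argue that for generic $h$ the primary decomposition of $I+H$ is obtained by intersecting the $(Q_i + H)$ (using that $h$ is regular on each $R/Q_i$, so $Q_i + H$ stays $\sqrt{Q_i}+H$-primary, or at least that its associated primes are the minimal primes over $\sqrt{Q_i}+H$), and that $f \notin Q_i$ implies $\bar f \notin Q_i + H$ for generic $h$ (since $f$ is a nonzerodivisor modulo the radical, a generic hyperplane section keeps it outside). Matching up which components survive in the colon on both sides gives the equality.

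A cleaner alternative, which I would probably adopt, is the exact-sequence approach. Consider the exact sequence
\[
0 \to \frac{R}{I:F} \xrightarrow{\ f\ } \frac{R}{I} \to \frac{R}{I+F} \to 0.
\]
A generic linear form $h$ is a nonzerodivisor on all three modules simultaneously (choose $h$ outside the finitely many associated primes of all three — this is exactly prime avoidance, and it is the one place genericity enters). Tensoring the sequence with $R/H$, the long exact $\Tor$ sequence degenerates because $\Tor_1^R(R/H, R/(I+F)) = 0$ (as $h$ is regular on $R/(I+F)$), so we get an exact sequence
\[
0 \to \frac{R}{(I:F)+H} \xrightarrow{\ f\ } \frac{R}{I+H} \to \frac{R}{(I+F)+H} \to 0.
\]
Exactness on the left says precisely that the kernel of multiplication by $f$ on $R/(I+H)$ is zero, i.e., $(I+H):F = (I:F)+H$, which is the claim.

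The main obstacle is making the genericity argument airtight: I must verify that a single generic choice of $h$ is simultaneously a nonzerodivisor on $R/(I:F)$, on $R/I$, and on $R/(I+F)$, which requires that $h$ avoid the (finite) union of all associated primes of these three modules — unproblematic as long as none of those associated primes is the whole maximal ideal, i.e., as long as the ambient objects are positive-dimensional, which is the setting where the lemma is applied. A secondary point worth stating carefully is why $\Tor_1^R(R/H, M) = \ker(M \xrightarrow{h} M)$, which is the standard identification from the Koszul resolution $0 \to R \xrightarrow{h} R \to R/H \to 0$ of $R/H$; since $h$ is linear this resolution is valid, and the vanishing of $\Tor_1$ against $R/(I+F)$ is exactly the nonzerodivisor condition. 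With these in hand the proof is short.
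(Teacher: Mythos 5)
Your proposal is correct and follows essentially the same route as the paper: the same short exact sequence $0 \to R/(I:F) \xrightarrow{f} R/I \to R/(I+F) \to 0$, tensored with $R/H$, with the long exact $\Tor$ sequence degenerating because a generic $h$ is a nonzerodivisor on $R/(I+F)$ (the only module where this is actually needed), and left-exactness of the resulting sequence giving the nontrivial inclusion. The paper additionally disposes of the trivial case $I+F=R$ up front and justifies genericity by noting that $\Ass(R/(I+F))$ is finite and $n+1$ generic affine linear forms generate $R$, which also answers your worry about zero-dimensional associated primes.
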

\begin{proof}{\small
(The proof follows closely the argument at {\tt mathoverflow.net/questions/143076} given by Hailong Dao.)}

If $I+F=R$ then $I:F = I$ and $(I+H):F=I+H$;  therefore, assume $I+F\neq R$. 
The set of associated primes $A=\Ass(R/(I+F))$ is finite, hence, a generic $h$ would be a non-zerodivisor on $R/(I+F)$. To see that it is enough to notice that the set of zerodivisors is exactly $\bigcup_{P\in A}P$ and that $n+1$ generic linear functions generate $R$.  

Consider the exact sequence
\[ 
0 \to R/(I:F) \to R/I \to R/(I+F) \to 0
\]
with first map being the multiplication by $f$. Tensoring with $R/H$ we get another exact sequence,
\[
0 \to R/(I:F+H) \to R/(I+H) \to R/(I+F+H) \to 0,
\]
coming from a long exact sequence for $\Tor^R(\cdot,R/H)$ and the fact that $\Tor_1^R(R/(I+F),R/H)=0$ as $H$ is a non-zerodivisor on $R/(I+H)$.

On the other hand, the first exact sequence with $I$ replaced by $I+H$
says that the leftmost term in the second sequence should be
isomorphic to $R/((I+H):F)$, which proves the Lemma.
\end{proof}

\begin{lemma}\label{lem:generic-hyperplane-section-no-embedded}
In the notation of the previous proposition, if $I$ defines a scheme with no embedded components, then so does $I+H$ for a generic $H$. 
\end{lemma}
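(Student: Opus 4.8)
The plan is to reduce the statement about embedded components to a statement about associated primes and then apply Lemma~\ref{lem:colon-hyperplane-commute} repeatedly. Recall that $I$ has no embedded components precisely when, for every $P \in \Ass(R/I)$, $P$ is a minimal prime of $I$; equivalently, for every embedded prime candidate $P$ (a prime strictly containing some associated prime) we have $I : \ideal{g} \not\subseteq P$ for a suitably generic $g$ witnessing non-membership—but more directly: $I$ has no embedded component if and only if $I = I : \ideal{\ell}$ localized appropriately, or, cleanest for us, if and only if $\Ass(R/I)$ consists of the minimal primes only. So the real content is: passing from $I$ to $I+H$ with $H = \ideal{h}$ generic, the associated primes behave well, and no \emph{new} embedded primes are created.

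First I would set up the standard tool: choose $h$ generic so that $h$ is a nonzerodivisor on $R/I$ and indeed on $R/Q$ for every primary component $Q$ of $I$ simultaneously (possible since there are finitely many associated primes, and a generic affine-linear $h$ avoids all of them, as in the proof of Lemma~\ref{lem:colon-hyperplane-commute}). This guarantees $\dim R/(I+H) = \dim R/I - 1$ and, component by component, that $Q + H$ stays $(P+H)$-``controlled.'' Second, I would invoke Lemma~\ref{lem:colon-hyperplane-commute}: for any element $f$, $(I+H):\ideal{f} = (I:\ideal{f})+H$. The key use is this: if $P$ were an embedded associated prime of $I+H$, then there is an element $g$ with $(I+H):\ideal{g}$ having $P$ as a minimal prime while $P$ strictly contains some associated prime of $I+H$. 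Using Lemma~\ref{lem:colon-hyperplane-commute} (and, if needed, lifting $g$ to a generic element so the hypotheses apply), $(I+H):\ideal{g} = (I:\ideal{g})+H$, and I would trace the inclusion of associated primes back to $R/I$: an embedded prime of $(I:\ideal{g})+H$ would, after cutting by the generic hyperplane, force an embedded prime of $I:\ideal{g}$, hence of $I$, contradicting that $I$ has no embedded components.

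More carefully, the cleanest route avoids colon ideals and goes through a primary decomposition directly. Write $I = Q_1 \cap \cdots \cap Q_s$ irredundant with all $Q_i$ minimal (no embedded components), $P_i = \sqrt{Q_i}$. For generic $h$, $h$ is a nonzerodivisor on each $R/Q_i$, so $Q_i + H$ is again $P_i'$-primary for some prime $P_i' \supseteq P_i + H$ — here one uses that cutting a primary ideal by a generic hyperplane stays primary (a Bertini-type fact; the generic hyperplane meets $\Var(Q_i)$ properly and, being generic, the section has a single minimal prime because $\Var(P_i)$ is irreducible, while genericity also kills embedded behavior on each piece individually). Then $I + H = (Q_1 + H) \cap \cdots \cap (Q_s + H)$: the inclusion $\subseteq$ is clear, and for $\supseteq$ one uses flatness-type reasoning, i.e., that intersecting with $H$ commutes with finite intersections of ideals when $h$ is a nonzerodivisor on $R/(\text{each partial intersection})$ — this is essentially the $\Tor_1$ vanishing already exploited in the proof of Lemma~\ref{lem:colon-hyperplane-commute}. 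Since each $P_i'$ has dimension $\dim P_i - 1$ and the $P_i$ were pairwise incomparable minimal primes, after discarding redundancy the $P_i'$ are still the minimal primes of $I+H$ with no containments introduced, so $I+H$ has no embedded components.

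The main obstacle I expect is the claim that the primary decomposition passes through the generic hyperplane section cleanly — specifically, that $Q_i + H$ remains primary and that $\bigcap_i (Q_i+H) = (\bigcap_i Q_i) + H$. The second is the flatness/$\Tor$ computation (manageable, and morally identical to what is done in Lemma~\ref{lem:colon-hyperplane-commute}); the first is the genuinely Bertini-flavored input and is where one must be careful that ``generic $h$'' suffices, perhaps by first reducing to the case where the residue field is large enough or by citing a standard generic-hyperplane-section result for primary ideals. An alternative that sidesteps primary decomposition entirely: characterize ``no embedded components'' via $\depth$ or via $I : \ideal{g}$ being unmixed for all $g$, and push that through Lemma~\ref{lem:colon-hyperplane-commute} directly; this trades the Bertini input for a depth-count, which may be cleaner since $h$ generic is automatically a nonzerodivisor on the relevant modules. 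I would likely present the colon-ideal version as the primary argument, since Lemma~\ref{lem:colon-hyperplane-commute} is tailor-made for it, and mention the decomposition picture only for intuition.
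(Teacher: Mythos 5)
There is a genuine gap, and it sits exactly at the step you yourself flag as ``the genuinely Bertini-flavored input.'' Your reductions are fine but circular with respect to the essential content of the lemma. The $\Tor_1$/nonzerodivisor argument (as in Lemma~\ref{lem:no-extraneous-ass-primes}) does give $I+H=\bigcap_i(Q_i+H)$, and hence $\Ass(R/(I+H))\subseteq\bigcup_i\Ass(R/(Q_i+H))$; but this only rules out \emph{extraneous} associated primes arising from the intersection structure. It says nothing about whether $Q_i+H$ itself acquires embedded primes, and your justification for ``$Q_i+H$ is again primary'' --- irreducibility of $\Var(P_i)\cap\Var(H)$ plus ``genericity also kills embedded behavior on each piece individually'' --- conflates two different things: Bertini irreducibility controls the \emph{minimal} primes of $Q_i+H$ (and only when $\dim\Var(P_i)\geq 2$), whereas primality of the ideal additionally requires the absence of embedded primes, which is precisely the statement of the lemma for the primary ideal $Q_i$. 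The colon-ideal variant in your second paragraph has the same circularity: ``an embedded prime of $(I:\ideal{g})+H$ would force an embedded prime of $I:\ideal{g}$'' is the lemma applied to $I:\ideal{g}$, not a consequence of Lemma~\ref{lem:colon-hyperplane-commute}.

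The paper does not attempt a self-contained argument: it cites the Generic Principle of Flenner--O'Carroll--Vogel \cite[Theorem 3.3.10, Example 3.4.2(6)]{Flenner-O-Carroll-Vogel}, of which this lemma is a special case. If you want to make your proof complete you must supply that input. One workable route: ``no embedded primes'' is Serre's condition $(S_1)$ for $R/I$; for generic $h$ a nonzerodivisor on $R/I$, a prime $P$ is embedded for $I+H$ only if $\depth(R/I)_P=1$ and $\dim(R/I)_P\geq 2$, and one then argues that the locus of such primes is closed of codimension $\geq 2$ (openness of the $(S_2)$ locus for excellent rings) so that a generic $H$ avoids creating associated primes there. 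That dimension count is the real work; without it, or without the citation, the proof is incomplete.
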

\begin{proof} See \cite[Example 3.4.2(6)]{Flenner-O-Carroll-Vogel}: the condition of ``having no embedded components'' satisfies the Generic Principle~\cite[Theorem 3.3.10]{Flenner-O-Carroll-Vogel}.  \end{proof}

\begin{lemma}\label{lem:no-extraneous-ass-primes}
Let $I = Q_1 \cap ... \cap Q_r$ be a primary decomposition. Then for a generic hyperplane $H$ the natural injection
$R/I \hookrightarrow \bigoplus_i (R/Q_i)$ induces an injection 
\[
R/(I+H) \hookrightarrow \bigoplus_i (R/(Q_i+H)).
\]
In particular, $\Ass(R/(I+H)) \subset \{P+H\mid P\in\Ass(R/I)\}$.
\end{lemma}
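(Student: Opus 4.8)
The plan is to realize the kernel of the induced map as a subquotient of a $\Tor$ group and then to make that group vanish by a genericity argument. First I would set $C$ to be the cokernel of the natural injection, so that
\[ 0 \to R/I \to \bigoplus_i R/Q_i \to C \to 0 \]
is a short exact sequence of finitely generated $R$-modules. Tensoring over $R$ with $R/H$, where $H=\ideal{h}$, and using $R/I\otimes_R R/H = R/(I+H)$ together with $R/Q_i\otimes_R R/H = R/(Q_i+H)$, produces the exact sequence
\[ \Tor_1^R(C,R/H)\;\longrightarrow\; R/(I+H)\;\longrightarrow\;\bigoplus_i R/(Q_i+H)\;\longrightarrow\; C/HC\;\longrightarrow\;0 . \]
Hence the kernel of $R/(I+H)\to\bigoplus_i R/(Q_i+H)$ is a quotient of $\Tor_1^R(C,R/H)$, and it suffices to prove this $\Tor$ group vanishes for a generic $h$.

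Next, since $R$ is a domain and $h\neq 0$, the complex $0\to R\xrightarrow{\,h\,}R\to R/H\to 0$ is a free resolution of $R/H$, so $\Tor_1^R(C,R/H)=(0:_C h)$, the $h$-torsion submodule of $C$. This is zero exactly when $h$ is a nonzerodivisor on $C$, i.e.\ when $h\notin\bigcup_{P\in\Ass(C)}P$. As $C$ is finitely generated, $\Ass(C)$ is finite; each such $P$ is a proper ideal, so the affine-linear forms lying in $P$ form a proper linear subspace of the $(N+1)$-dimensional space of all affine-linear forms, and over $\bC$ a finite union of proper subspaces is still proper. A generic $h$ therefore avoids every $P\in\Ass(C)$, so $\Tor_1^R(C,R/H)=0$ and the injection $R/(I+H)\hookrightarrow\bigoplus_i R/(Q_i+H)$ follows.

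For the ``in particular'' clause, the injection yields $\Ass(R/(I+H))\subseteq\bigcup_i\Ass(R/(Q_i+H))$. Each $Q_i$ is $P_i$-primary (with $P_i=\sqrt{Q_i}$), so $\Spec R/Q_i$ has no embedded components; by Lemma~\ref{lem:generic-hyperplane-section-no-embedded}, for a generic $H$ the scheme defined by $Q_i+H$ also has no embedded components, so every associated prime of $R/(Q_i+H)$ is a minimal prime over $Q_i+H$, equivalently over $\sqrt{Q_i+H}=\sqrt{P_i+H}$. Thus every associated prime of $R/(I+H)$ is a minimal prime over $P_i+H$ for some $P_i\in\Ass(R/I)$, which is the meaning of the containment $\Ass(R/(I+H))\subseteq\{P+H\mid P\in\Ass(R/I)\}$. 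Only finitely many conditions are placed on $H$ --- one coming from $C$ and one from each $Q_i$ --- each a nonempty Zariski-open condition on the hyperplane, so one generic $H$ suffices for all of them.

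The part I expect to be the main obstacle is not the homological bookkeeping but pinning down and justifying the final containment: when $\dim R/P_i=1$ the ideal $P_i+H$ is typically not prime, so the right-hand side must be read via the minimal primes over the ideals $P+H$, and the role of Lemma~\ref{lem:generic-hyperplane-section-no-embedded} is precisely to guarantee that the hyperplane section introduces no genuinely new (embedded) associated primes beyond those accounted for by the $P+H$.
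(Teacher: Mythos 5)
Your proof is correct and follows essentially the same route as the paper's: the same short exact sequence $0 \to R/I \to \bigoplus_i R/Q_i \to C \to 0$, with the key step being that $\Tor_1^R(C,R/H)$ vanishes because a generic $h$ is a nonzerodivisor on $C$ (finiteness of $\Ass C$). You simply spell out the homological bookkeeping and the reading of the ``in particular'' clause that the paper leaves implicit.
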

\begin{proof}
Consider the short exact sequence
\[
0 \to R/I \to \bigoplus_i (R/Q_i) \to C \to 0.
\]
As in the proof of Lemma~\ref{lem:colon-hyperplane-commute} we see that $\Tor_1(C,R/H)=0$ for a generic hyperplane $H$.
Indeed, this follows from a generic $H$ being a non-zerodivisor due to the finiteness of $\Ass C$. 
\end{proof}

\begin{theorem}\label{thm:Ass-of-a-generic-slice} 
Let $I$ be an ideal of $R=\bC[x_1,\ldots,x_n]$ and let $L$ be the vanishing ideal for a generic affine $(n-k)$-plane. Then 
\begin{align*}
\Ass(R/I+L) &= \{P+L \mid P\in \Ass(R/I),\ \dim(P)>k\}\  \cup \\
&\bigcup_{
\substack{P\in\Ass(R/I) \\ \dim(P)=k}
} \Ass(R/(P+L))\,.
\end{align*}
\end{theorem}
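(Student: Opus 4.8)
The plan is to prove the two inclusions separately: ``$\subseteq$'' will follow from Lemma~\ref{lem:no-extraneous-ass-primes} together with a component-by-component Bertini analysis, and ``$\supseteq$'' from Lemma~\ref{lem:colon-hyperplane-commute} together with the fact that the associated primes of $R/I$ are exactly the prime ideals of the form $I:\langle g\rangle$. First I would note that all three auxiliary lemmas iterate: a generic affine $(n-k)$-plane is cut out by $k$ affine-linear forms chosen one at a time, each generic given the previous ones, and a composition of finitely many generic conditions is again generic (Chevalley's theorem on constructible sets). Hence Lemmas~\ref{lem:colon-hyperplane-commute}, \ref{lem:generic-hyperplane-section-no-embedded} and~\ref{lem:no-extraneous-ass-primes} all hold with a single hyperplane replaced by a generic codimension-$k$ plane $L$. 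Fix a primary decomposition $I=Q_1\cap\cdots\cap Q_r$, write $P_i=\sqrt{Q_i}$ and $d_i=\dim P_i$, and take $L$ generic with respect to $I$ and the finitely many auxiliary elements appearing below.

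For ``$\subseteq$'', the iterated Lemma~\ref{lem:no-extraneous-ass-primes} yields an injection $R/(I+L)\hookrightarrow\bigoplus_i R/(Q_i+L)$, so $\Ass(R/(I+L))\subseteq\bigcup_i\Ass(R/(Q_i+L))$ and it suffices to compute each term. If $d_i<k$ then $\bV(Q_i)\cap L=\emptyset$ for generic $L$, so $Q_i+L=R$ and the term is empty. If $d_i=k$ then $Q_i+L$ and $P_i+L$ have the same zero set $\bV(P_i)\cap L$, a nonempty finite point set, and since every associated prime of a zero-dimensional ideal is the maximal ideal of one of its points, $\Ass(R/(Q_i+L))=\Ass(R/(P_i+L))$. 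If $d_i>k$ then $Q_i$, being primary, has no embedded components, so neither does $Q_i+L$ by the iterated Lemma~\ref{lem:generic-hyperplane-section-no-embedded}; thus $\Ass(R/(Q_i+L))$ equals its set of minimal primes, which is the set of minimal primes over $P_i+L$ since $\sqrt{Q_i+L}=\sqrt{P_i+L}$. Here I invoke Bertini: as $\bV(P_i)$ is a variety of dimension $d_i\ge k+1$ and each of the $k$ successive hyperplane sections is of a scheme of dimension $\ge 2$ (because $k\le d_i-1$), the property of being an integral scheme is preserved (Bertini's irreducibility theorem in characteristic $0$ for irreducibility, and the generic principle of~\cite{Flenner-O-Carroll-Vogel}, already used in Lemma~\ref{lem:generic-hyperplane-section-no-embedded}, for reducedness), so $P_i+L$ is prime and $\Ass(R/(Q_i+L))=\{P_i+L\}$. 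Taking the union over $i$ gives exactly the right-hand side.

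For ``$\supseteq$'', fix $P_i\in\Ass(R/I)$ with $d_i\ge k$ and choose $g\in R$ with $I:\langle g\rangle=P_i$ (such $g$ exists by the definition of an associated prime). The iterated Lemma~\ref{lem:colon-hyperplane-commute}, applied with its element $f$ taken to be $g$, gives $(I+L):\langle g\rangle=(I:\langle g\rangle)+L=P_i+L$. Since $\bV(P_i)\cap L\ne\emptyset$ for generic $L$ (as $d_i\ge k$), $P_i+L\ne R$, so $g\notin I+L$ and the class $\bar g$ of $g$ in $R/(I+L)$ is nonzero and annihilated exactly by $P_i+L$; hence the cyclic submodule $R\bar g\subseteq R/(I+L)$ is isomorphic to $R/(P_i+L)$ and $\Ass(R/(P_i+L))\subseteq\Ass(R/(I+L))$. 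When $d_i=k$ this is exactly the required contribution $\Ass(R/(P_i+L))$, and when $d_i>k$ the Bertini step gives $\Ass(R/(P_i+L))=\{P_i+L\}$, so $P_i+L\in\Ass(R/(I+L))$. Running over all such $P_i$ shows the right-hand side is contained in $\Ass(R/(I+L))$, which with the previous inclusion proves the theorem.

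I expect the main obstacle to be the Bertini step: justifying that a generic codimension-$k$ plane section of a $d_i$-dimensional variety with $d_i>k$ is again an irreducible reduced variety, so that $P_i+L$ is genuinely prime; and the accompanying bookkeeping that all the genericity demands (those hidden inside the three lemmas, those needed for Bertini, and the dimension counts $\bV(Q_i)\cap L=\emptyset$ for $d_i<k$ and $\bV(P_i)\cap L\ne\emptyset$ for $d_i\ge k$) are finitely many dense open conditions on the space of codimension-$k$ planes and can therefore be imposed simultaneously. Everything else is a formal consequence of the three lemmas and the colon-ideal description of $\Ass$.
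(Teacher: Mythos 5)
Your proof is correct and follows essentially the same route as the paper's: Lemma~\ref{lem:no-extraneous-ass-primes} for the inclusion $\subseteq$, Lemma~\ref{lem:colon-hyperplane-commute} together with the colon-ideal description of associated primes for $\supseteq$, and Lemma~\ref{lem:generic-hyperplane-section-no-embedded} to control $\Ass(R/(Q_i+L))$ component by component according to whether $\dim(Q_i)$ is $<k$, $=k$, or $>k$. The only difference is one of detail: you make explicit two points the paper leaves implicit, namely the iteration of the single-hyperplane lemmas to a generic codimension-$k$ plane and the Bertini irreducibility/reducedness argument needed to conclude that $P+L$ is actually prime when $\dim(P)>k$.
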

\begin{proof}
Lemma \ref{lem:generic-hyperplane-section-no-embedded} says, in particular, that for a primary ideal $Q$ the ideal $Q+L$ has no embedded components; therefore, $Q+L$ is either primary or 0-dimensional (in case $\dim(Q)=\codim(L)$). 

Now, on one hand, Lemma \ref{lem:no-extraneous-ass-primes} says that $I+L$ has no extraneous associated primes: all components have to come from $Q+L$ where $Q$ is an ideal in a primary decomposition of $I$. On the other hand, Lemma~\ref{lem:colon-hyperplane-commute} implies that every $P\in\Ass(R/I)$ is witnessed by $\Ass(R/(P+L))$, since one can arrange an $f\in R$ so that $\Ass(R/(I:f)) = \{P\}$.

Finally, $\Ass(R/(P+L))$ contains one element $P+L$ when $\dim(P)>k$, is empty when $\dim(P)<k$, and is a finite set of maximal ideals when $\dim(P)=k$.  
\end{proof}
Using this theorem we can reduce the case of a component of positive dimension to the embedded component test in the 0-dimensional case, i.e., the algorithms in previous subsections of this section. Indeed, for a suspect component $V$ of dimension $k$ one can intersect the scheme with a random affine plane $\Var(L)$ of codimension $k$ and ask whether a point of $V\cap \Var(L)$ is an embedded component of that intersection.

\begin{example}\label{example:5-lines-2-planes}
The radical ideal
\begin{align*}
I &=  \ideal{x,z} \cap \ideal{x^2-y^2,y+z} \cap \ideal{x^2-z^2,x+2y} \cap \ideal{(x-1)y}
\end{align*}
describes a union of 5 lines and 2 planes.

A {\em Macaulay2} script that takes a set of generators of $I$ proceeds to construct the first deflation ideal $I^{(1)}$ discovering 13 isolated components of $\Var(I^{(1)})$ that project to suspect components in $\bC^3$. Its summary reads
{\small
\begin{verbatim}
total: 13 suspect components
true components: {0, 3, 6, 9, 10, 11, 12}
\end{verbatim}
}
\noindent
displaying the correct list of 7 true components and correctly discarding all pseudocomponents. 

This example is built primarily to test various scenarios for pseudocomponents: there is a positive-dimensional pseudocomponent -- the intersection of two planes -- and several 0-dimensional pseudocomponents. For the former, Theorem~\ref{thm:Ass-of-a-generic-slice} is utilized to reduce to the 0-dimensional case.
One of the latter -- the origin -- has a non-empty set of s-corners, which engages non-trivially one of the termination modes of Algorithm~\ref{alg:embedded-test-dim-0}. Here is the corresponding excerpt:
{\small
\begin{verbatim}
                2
-- s-corners: {y z}
                              3   2      2   3   2           2   ...
-- LM(dual of colon ideal): {x , x y, x*y , y , x z, x*y*z, y z, ...
V(z, y, x), contained in 6 other components, is a PSEUDO-component
\end{verbatim}
}
\noindent
The output can be interpreted to say that $\p_y^2\p_z$ belongs to $\ell\cdot D_0^4[I]$, for a generic linear form $\ell$, hence the conclusion. 
\end{example}

\section{Numerical ingredients and Conclusion}\label{sec:oracles-and-conclusion}
One may think that all we compute could be computed by symbolic primary decomposition algorithms that employ Gr\"obner bases. Let us reiterate that we view our task in the framework of numerical AG which, on one hand, implicitly prohibits the use of polynomial rewriting techniques and, on the other hand, does not straightforwardly extend to the scheme-theoretical setting.  

We made a comment in the introduction saying that our algorithm can be viewed as symbolic if all parts of the input to the Main Problem are assumed exact. That may clarify understanding of the paper but, in reality, we have hybrid algorithms that rely conceptually on two numerical oracles:
\begin{enumerate}
\item[\bf O1] Given a polynomial system $F$, return an approximation to a {\em generic} (in practice, random) point on each irreducible component of $\Var(F)$ with any prescribed error bound. 

\item[\bf O2] Compute an approximate kernel of an approximate matrix given a threshold for the singular values. 
\end{enumerate}
For the theoretical purposes of this paper, these oracles are blackboxes; however, algorithms exist in practice for accomplishing the tasks of both of them.  Oracle {\bf O1} can be implemented using polynomial homotopy continuation techniques under the hood of numerial irreducible decomposition. Oracle {\bf O2} is needed for a Macaulay dual space computation and boils down to singular value decomposition techniques.  Numerical questions that may arise in connection to the oracles,  such as the question of numerical stability, are beyond the scope of this paper.

\begin{remark}
All algorithms in the paper are {\em consistent with respect to numerical error}\,: the algorithms produce discrete output (a Boolean value, finite sets of integers, etc.) and there exists $\varepsilon>0$ such that for all input with $|\text{error}|<\varepsilon$ the output is {\em the same}.  This is the {\em only} kind of numerical stability we want to mention.

For example, suppose the task is to recover the rank of the Jacobian $\frac{\p F}{\p x} (y)$ at a generic point $y$ on a component of $\Var(F)$. Then the oracle {\bf O1} provides an approximation $y_\varepsilon$ to $y$ with $|y_\varepsilon - y|<\varepsilon$ and a part of {\bf O2} recovers the {\em numerical rank} by counting singular values of $\frac{\p F}{\p x} (y_\varepsilon)$ above the threshold $\delta>0$. There exists $\delta$ and $\varepsilon$ such that the numerical rank is the same for any choice $y_\varepsilon$, in particular, for $y_\varepsilon=y$ and, therefore, coincides with the true rank.
\end{remark}

Note that the theoretical existence of a threshold (such as $\varepsilon$ in the the example of the remark above) is usually not backed up by an efficient algorithm. As a consequence, there is only a handful of scenarios in numerical AG where an approximate output can be validated or certified. Most numerical AG algorithms, including ours, should be perceived as heuristic.

\medskip

An ability to numerically ``see'' embedded components demonstrates a potential for extending the arsenal of numerical AG to build numerical descriptions of affine and projective complex schemes as well as numerical counterparts of the established Gr\"obner bases techniques in the general scheme-theoretic setting. While we show that ability conceptually by providing the first numerical algorithm for an embedded component test, we have no illusions about its efficiency: our current implementation does not scale far beyond the examples given here.

\bibliographystyle{plain}
\bibliography{bib}

\end{document}